\newcommand{\IF}{{\mathbb{F}}}
\newcommand{\IZ}{{\mathbb{Z}}}
\newcommand{\fp}{{\mathfrak{p}}}     
\newcommand{\cO}{{\mathcal{O}}}
\newcommand{\bB}{{\mathbf{B}}}
\newcommand{\bb}{{\mathbf{b}}}
\DeclareMathOperator{\Syl}{Syl}                  
\DeclareMathOperator{\Ind}{Ind}                  
\DeclareMathOperator{\tr}{tr}			
\DeclareMathOperator{\Irr}{Irr}
\DeclareMathOperator{\TS}{TS}
\DeclareMathOperator{\St}{St}
\DeclareMathOperator{\Br}{Br}
\DeclareMathOperator{\Triv}{Triv}
\DeclareMathOperator{\GL}{\operatorname{GL}}
\DeclareMathOperator{\SL}{\operatorname{SL}}
\DeclareMathOperator{\PSL}{\operatorname{PSL}}
\let\lra=\longrightarrow
\newtheorem{thm}{Theorem}[section]
\newtheorem{lem}[thm]{Lemma}
\newtheorem{nota}[thm]{Notation}
\newtheoremstyle{defnew}{4ex}{}{}{}{\bf}{}{.5em}{}
\theoremstyle{defnew}
\newtheorem{conv}[thm]{Convention}
\theoremstyle{remark}
\theoremstyle{theorem}
\begin{document}


	\title[Trivial source character tables of $\SL_2(q)$, Part II]{Trivial source character tables of $\SL_2(q)$, Part II
	}
	\dedicatory{Dedicated to the to memory of Christine Bessenrodt}
	
	\date{\today}

	\author{{Niamh Farrell and Caroline Lassueur}}
	\address{{\sc Niamh Farrell}, Institut f\"ur Algebra, Zahlentheorie und Diskrete Mathematik, Leibniz Universit\"at Hannover, Welfengarten~1, 30176 Hannover, Germany.}
	\email{farrell@math.uni-hannover.de}
	\address{{\sc Caroline Lassueur},  FB Mathematik, TU Kaiserslautern, Postfach 3049, 67653 Kaiserslautern, Germany}
	\email{lassueur@mathematik.uni-kl.de}

	\keywords{Special linear group, trivial source modules, $p$-permutation modules, species tables, character theory, block theory, Brauer correspondence, Green correspondence}
	
	\makeatletter
	\@namedef{subjclassname@2020}{%
		\textup{2020} Mathematics Subject Classification}
	\makeatother
	
	\subjclass[2020]{Primary 20C20, 20C33}

	\begin{abstract}
		We compute the trivial source character tables (also called species tables of the trivial source ring) of the infinite family of  finite groups $\SL_{2}(q)$ for $q$ even, over a large enough field  of odd characteristic. This article is a continuation of our article \emph{Trivial source character tables of $\SL_{2}(q)$} where we considered, in particular, the case in which~$q$~is odd in non-defining characteristic.
	\end{abstract}

	
	\maketitle

	
	\pagestyle{myheadings}
	\markboth{N. Farrell, C. Lassueur}{Trivial source character tables of $\SL_2(q)$, part II}

\section{Introduction}
Let $G$ be a finite group, let $\ell$ be a prime number dividing $|G|$ and let $k$ be an algebraically closed field of characteristic $\ell$. 
Permutation $kG$-modules and their direct summands, the trivial source modules, are omnipresent in the modular representation theory of finite groups. They are, for example,  elementary building blocks for the construction and for the understanding of different categorical equivalences between block algebras, such as source-algebra equivalences, Morita equivalences with endo-permutation source, splendid Rickard equivalences, or $\ell$-permutation equivalences.
A deep understanding of the structure of these modules is therefore essential.
\par
The \emph{trivial source character table of $G$ at the prime $\ell$}, denoted by  $\Triv_{\ell}(G)$, is by definition the species table of the trivial source ring of $kG$ 
in the sense of~\cite{BP}. The present article is a sequel to \cite{BFL22}, in which B\"ohmler and the authors calculate the trivial source character tables for the special linear group $\SL_2(q)$ over the finite field~$\IF_{q}$ when $q$ and $\ell$ are odd and $\ell \nmid q$, and when $q$ is odd, $\ell=2$ and $\SL_2(q)$ has quaternion Sylow $2$-subgroups. 
We refer the reader to the latter article for a complete introduction to trivial source character tables. 
We emphasise here that these table encapsulate, in a very compact way, a lot of information about the ordinary and Brauer characters of the trivial source $kG$-modules, as well as of those of their Brauer quotients. 
\par
In this article we calculate the trivial source character tables of $\SL_2(2^f) = \PSL_2(2^f)$  in non-defining characteristic for any integer $f \geq 2$. Note that if  $f = 1$ then $\SL_2(2) \cong S_3$ and the trivial source character tables are easily calculated using elementary arguments (see e.g. \cite{BensonBookOld}). Our main results, the trivial source character table $\Triv_{\ell}(\SL_2(2^f))$, appear in Tables \ref{tab:lmidq-1T_i1} and \ref{tab:lmidq-1T_ii}  for $\ell \mid 2 ^f-1$ and in Tables \ref{tab:lmidq+1T_i1}, \ref{tab:lmidq+1T_ii} and \ref{tab:lmidq+1T_n+1n+1} for $\ell \mid 2^f+1$. 
\par
The character table of $\SL_2(2^f)$ and the block distributions are given in~\cite{Burkhardt}. However, it is more convenient for our purposes to interpret this data in terms of Harish-Chandra and Deligne-Lusztig induction, as~\cite{BonBook} does for $\SL_{2}(q)$ with $q$ odd. This done, one of the main issues we solve in this article is the explicit calculation of the Brauer correspondence in the normaliser $N$ of a Sylow $\ell$-subgroup of $\SL_2(2^f)$, and the explicit calculation of the Green  correspondents in $N$  of the trivial source $k\SL_2(2^f)$-modules. 
\par
The paper is organised as follows. In Section~\ref{sec:notanddefs} we recall the notation and definitions for trivial source character tables and for blocks with cyclic defect groups which were established in \cite{BFL22}. Section~\ref{sec:SL2(q)} contains notation and preliminary results on the structure of $\SL_2(2^f)$. The trivial source character tables are calculated in Section~\ref{sec:tschartablelmidq-1} for $\ell \mid 2^f-1$,  respectively Section~\ref{sec:tschartablelmidq+1}  for $\ell \mid 2^f+1$\,.

\section{Notation and Definitions}
\label{sec:notanddefs}

\subsection{General notation}
Throughout, unless otherwise stated, we adopt the notation and conventions given below.
We let~$\ell$ denote a prime number and  $G$ denote a finite group of order divisible by~$\ell$. We let $(K,\cO,k)$ be an $\ell$-modular system, where  $\cO$ denotes a complete discrete valuation ring of characteristic zero with unique maximal ideal $\frak{p}:=J(\cO)$, algebraically closed residue field~$k=\cO/\fp$ of characteristic~$\ell$, and field of fractions $K=\text{Frac}(\cO)$, which we assume to be large enough for $G$ and its subgroups.
\par
Given a positive integer~$n$, we denote by $C_{n}$ the cyclic group of order~$n$. By an $\ell$-block of $G$, we mean a block algebra of $kG$. 
We denote by $\Irr(G)$ (resp. $\Irr(\bB)$)  the set of irreducible $K$-characters of~$G$ (resp. of the block of $\cO G$ corresponding to the $\ell$-block $\bB$), and if $H$ is abelian then we write $H^\wedge:=\Irr(H)$.  We write $\bB_0(G)$ for the principal $\ell$-block of $G$. 
\par

For $R\in\{\cO,k\}$, $RG$-modules are assumed to be finitely generated left $RG$-lattices, that is,  free as $R$-modules, and  we let  $R$  denote the trivial $RG$-lattice. 
If $M$ is a $kG$-module and $Q\leq G$, then the Brauer quotient (or Brauer construction) of $M$ at $Q$ is the $k$-vector space $M[Q]:=M^{Q}\big/ \sum_{R<Q}\tr_{R}^{Q}(M^{R})$, 
where $M^{Q}$ denotes the fixed points of $M$ under $Q$ and $\tr_{R}^{Q}$  for $R<Q$ denotes  the relative trace map. 
This vector space has a natural structure of a $kN_{G}(Q)$-module, but also of a $kN_{G}(Q)/Q$-module, and is equal to zero if $Q$ is not an $\ell$-group.
We refer the reader to \cite{LinckBook, TheBook} for further standard notation and background results in the modular representation theory of finite groups.

\subsection{Trivial source character tables}

Given $R\in\{\cO,k\}$, an $RG$-lattice $M$ is called a \emph{trivial source} $RG$-lattice if it is isomorphic to an indecomposable  direct summand of an induced lattice $\Ind_{Q}^{G}{R}$, and if $Q$ is of minimal order subject to this property, then $Q$ is a vertex of~$M$. 
Any trivial source $kG$-module $M$ lifts in a unique way to a trivial source $\cO G$-lattice $\widehat{M}$ (see e.g. \cite[Corollary 3.11.4]{BensonBookI}) and we denote by $\chi^{}_{\widehat{M}}$ the $K$-character afforded by $\widehat{M}$.
Up to isomorphism, there are only finitely many trivial source $kG$-modules (see e.g. \cite[Proposition 2.2 (d)]{BFL22}) and we will study them vertex by vertex. We denote by $\TS(G;Q)$ the set of isomorphism classes of indecomposable trivial source $kG$-modules with vertex $Q$. 
We let $a(kG,\Triv)$ be the \emph{trivial source ring} of $kG$, which is defined to be the subring of the Grothendieck ring of $kG$ generated by the set of all isomorphism classes of indecomposable trivial source $kG$-modules.

By definition the \emph{trivial source character table of the group $G$ at  the prime $\ell$}, denoted $\text{Triv}_{\ell}(G)$,  is the species table (or representation table) of the trivial source ring of $kG$ in the sense of Benson and Parker; see~\cite{BP}. 
However, as in~\cite{BFL22}, we follow~\cite[Section 4.10]{LP} and consider $\text{Triv}_{\ell}(G)$ as the block square matrix defined by the following notational convention.

\begin{conv}\label{conv:tsctbl}%
First, fix  a set of representatives $Q_1,\ldots, Q_r$ ($r\in\mathbb{N}$) for the conjugacy classes of $\ell$-subgroups of $G$ where $Q_{1}:=\{1\}$ and $Q_{r}\in\Syl_{\ell}(G)$. For each $1\leq v\leq r$ set $N_{v}:=N_{G}(Q_{v})$, $\overline{N}_{v}:=N_{G}(Q_{v})/Q_{v}$.  
Then, for each pair $(Q_{v},s)$ with $1\leq v\leq r$ and $s\in [\overline{N}_{v}]_{\ell'}$ there is a ring homomorphism  
		\begin{center}
			\begin{tabular}{cccl}
				$\tau_{Q_{v},s}^{G}$\,:            &   $a(kG,\mbox{Triv})$      & $\lra$ &    $K$     \\
				&   $[M]$      & $\mapsto$     &   $\varphi^{}_{M[Q_{v}]}(s)$   
			\end{tabular}
		\end{center}
mapping the class of a trivial source $kG$-module $M$ to the value at $s$ of the Brauer character $\varphi^{}_{M[Q_{v}]}$ of the Brauer quotient $M[Q_{v}]$. 
For each $1\leq i,v\leq r$ define a matrix
	$$T_{i,v}:=\big[ \tau_{Q_{v},s}^{G}([M])\big]_{M\in \TS(G;Q_{i}), s\in [\overline{N}_{v}]_{\ell'} }\,.$$
	The trivial source character table of  $G$ at the prime $\ell$ is then the block matrix 
	$$\text{Triv}_{\ell}(G):=[T_{i,v}]_{1\leq i,v\leq r}\,.$$
	Moreover, the rows of $\text{Triv}_{\ell}(G)$ are labelled with the ordinary characters $\chi^{}_{\widehat{M}}$ instead of the isomorphism classes of  trivial source modules $M$ themselves.
\end{conv}

We note that the group $G$ acts by conjugation on the pairs $(Q_{v},s)$ and the values of $\tau_{Q_{v},s}^{G}$ do not depend on the choice of  $(Q_{v},s)$ in its $G$-orbit. We refer the reader to \S 2 of our previous paper~\cite{BFL22} for details and further properties of trivial source modules and trivial source character tables. 

\subsection{Blocks with cyclic defect groups}\label{ssec:cycbl}
In the cases considered in  this article, we need to describe trivial source modules lying in blocks with cyclic defect groups. Therefore, we recall the following essential notions about cyclic blocks. Further details can be found in the first part of our paper \cite{BFL22} and also in \cite{HL20}. 
\par
Given an $\ell$-block $\bB$ of $kG$ with a non-trivial cyclic defect group $D\cong C_{\ell^n}$ ($n\geq 1$), we let $D_{1}<D$ denote the subgroup of $D$ of order $\ell$, 
we let $e$ denote the inertial index of $\bB$. We write
$$\Irr(\bB)=\Irr'(\bB)\sqcup\{\chi_{\lambda} \mid \lambda\in\Lambda\}\,,$$
where $\Irr'(\bB)$ is the set of the non-exceptional $K$-characters (there are $e$ of them) of $\bB$ and $|\Lambda|= \frac{|D|-1}{e}$. 
If $|\Lambda|>1$, then  $\{\chi_{\lambda} \mid \lambda\in\Lambda\}$ is the set of exceptional $K$-characters of $\bB$, which  all restrict in the same way to the $\ell$-regular conjugacy classes of~$G$. Further, we set $\chi_{\Lambda}:=\sum_{\lambda\in\Lambda}\chi_{\lambda}$.
The Brauer tree of $\bB$ is  then the graph  $\sigma(\bB)$  with vertices labelled by $\Irr^{\circ}(\bB):=\Irr'(\bB)\sqcup\{\chi_{\Lambda}\}$ and edges  labelled by the simple $\bB$-modules. 
If $|\Lambda|>1$  the vertex corresponding to $\chi_{\Lambda}$ is called the \emph{exceptional vertex} and is indicated with a filled black circle in our drawings of $\sigma(\bB)$.  
\\

Vertices and sources of indecomposable modules are encoded in a source algebra of a block, and hence so are the trivial source $\bB$-modules. 
We recall that by the work of Linckelmann~\cite{Linck96}, a source algebra of $\bB$ is determined up to isomorphism of interior $D$-algebras by three parameters:
\begin{enumerate}[{\,\,\rm(1)}] \setlength{\itemsep}{2pt}
\item $\sigma(\bB)$, understood with its planar embedding;
\item a \emph{type function} associating a sign to each vertex in an alternating way as follows: if $x$ is a generator of $D_{1}$, then a vertex $\chi\in\Irr^{\circ}(\bB)$ of $\sigma(\bB)$ is said to be positive  if $\chi(x)>0$, whereas it is said to be negative if $\chi(x)<0$;
\item an indecomposable capped endo-permutation $kD$-module $W(\bB)$; more precisely letting $\bb$ be the Brauer correspondent of $\bB$ in $N_G(D_1)$, then  $W(\bB)$ is defined to be a source of the  simple $\bb$-modules.  
\end{enumerate}
It turns out that $\bB$ contains precisely $e$ trivial source $kG$-modules for each possible vertex $Q\leq D$. These trivial source modules are explicitly classified by~\cite[Theorem~5.3]{HL20} as a function of the three parameters above. We refer to \cite[Remark~2.2]{BFL22} for a summary of this classification, relevant to the cyclic blocks of $\SL_{2}(q)$.

\section{Structure and characters of $\SL_2(q)$ when $q$ is even}
\label{sec:SL2(q)}

From now on, and until the end of this article, we assume that $\ell\neq 2$. Moreover, we let  ${G := \SL_2(q) = \PSL_2(q)}$ be the special linear group of degree $2$ over the finite field $\IF_{q}$, where~${q=2^f}$ for some integer $f \geq 2$.  Given a positive integer $r$, let $\mu_{r}$ be the group of the $r$-th roots of unity in an algebraic closure $\IF$ of $\IF_{q}$. For a subset of $S\subseteq \IF^{\times}$, let $[S/\equiv]$ denote a set of representatives for the elements of $S$ up to inverse.\\

In this section, we collect all necessary information about $G$ and its subgroups needed in order to calculate the trivial source character tables of $G$ in cross-characteristic.
Our aim is to use notation analogous to that used in~\cite{BonBook}.  However, \cite{BonBook} cannot be cited directly as it is assumed throughout the book that $q$ is odd. 
We note that the notation and some arguments need small adjustments when $q$ is even. We also refer to the unpublished master thesis of Schulte~\cite{Schulte} where further details, but not all, can be found.

\subsection{Tori, centralisers and normalisers  of $\ell$-elements}
We let $T := \{\text{diag}(a,a^{-1})  \mid a \in \IF_q^\times\}$ be the maximally split torus of $G$ consisting of the diagonal matrices. Then, there exists an isomorphism 
\[ \mathbf{d}: \mu_{q-1}=\IF_{q}^{\times} \longrightarrow T, a\mapsto \text{diag}(a,a^{-1}).\]
Fixing an $\IF_{q}$-basis of $\IF_{q^{2}}$ induces a group isomorphism $\mathbf{d'}:\GL_{\IF_{q}}(\IF_{q^{2}})\lra\GL_{2}(\IF_{q})$. 
The image $T':= \mathbf{d'}(\mu_{q+1})$ of $\mu_{q+1}$ under this isomorphism is a non-split torus of $G$. 
We will identify $T$ with $\mu_{q-1}$ and~$T'$ with $\mu_{q+1}$ via $\mathbf{d}$ and $\mathbf{d}'$ without further mention. 
As $q$ is even, both $T$ and $T'$ are cyclic groups of odd order. We let $S_\ell$ and $S_{\ell}'$ denote Sylow $\ell$-subgroups of $T$ and $T'$ respectively, and thus  we have a decomposition into direct products
$ T = S_{\ell} \times T_{\ell'}$, and $ T' = S_{\ell}' \times T_{\ell'}'$.
\par 
Finally, we consider the Frobenius automorphism  $F: \IF_{q^2}  \rightarrow \IF_{q^2}, x \mapsto x^q$, which we see as an element of $\GL_{\IF_q}(\IF_{q^2})$. We fix 
\[
\sigma := \left( \begin{smallmatrix} 0 & 1 \\ 1 & 0 \end{smallmatrix} \right)\qquad\text{and}\qquad \sigma' := \mathbf{d'}(F)\,;
\]
which are clearly both of order~$2$.

The following two lemmas are well known and can be proved using elementary arguments similar to those used in \cite[Section 1.3 and 1.4]{BonBook}.

\begin{lem}{\ }
\label{lem:1}
	\begin{enumerate}[{\,\,\rm(a)}]
		\item If $g = \textbf{d}(a)$ with $a \in \mu_{q-1} \setminus \{1\}$ then $C_G(g) = T$. In particular, $C_{G}(T)=T$.
		\item If $g = \mathbf{d'}(\xi)$ with $\xi \in \mu_{q+1} \setminus \{1\}$ then $C_G(g) = T'$. In particular, $C_{G}(T')=T'$.
	\end{enumerate}
\end{lem}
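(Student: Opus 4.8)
The plan is to compute the centralisers directly by elementary linear algebra over $\IF_q$, treating the split and non-split cases in parallel. For part (a), let $g=\mathbf{d}(a)=\mathrm{diag}(a,a^{-1})$ with $a\neq a^{-1}$ (which holds since $a\neq 1$ and $\IF_q$ has characteristic~$2$, so $a=a^{-1}$ forces $a^2=1$, i.e. $a=1$). A matrix $h=\left(\begin{smallmatrix} x & y \\ z & w\end{smallmatrix}\right)\in\GL_2(\IF_q)$ commutes with $g$ if and only if $ay=a^{-1}y$ and $a^{-1}z=az$, i.e. $(a-a^{-1})y=0=(a-a^{-1})z$; since $a-a^{-1}\neq 0$ this forces $y=z=0$, so $h$ is diagonal, and $h\in\SL_2(q)$ then forces $h=\mathrm{diag}(x,x^{-1})\in T$. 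Hence $C_G(g)=T$, and taking $g$ to be any non-identity element of $T$ gives $C_G(T)\subseteq C_G(g)=T$; the reverse inclusion is clear since $T$ is abelian.

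For part (b) the idea is the same but carried out inside $\GL_{\IF_q}(\IF_{q^2})\cong\GL_2(\IF_q)$, where the non-split torus $T'$ is identified with $\mu_{q+1}\subseteq\IF_{q^2}^\times$ acting on $\IF_{q^2}$ by multiplication. An element $\xi\in\mu_{q+1}\setminus\{1\}$ does not lie in $\IF_q$: indeed $\xi\in\IF_q^\times$ would give $\xi^{q-1}=1$, and combined with $\xi^{q+1}=1$ this yields $\xi^{\gcd(q-1,q+1)}=\xi^1=1$ (as $q$ is even, $\gcd(q-1,q+1)=1$), contradicting $\xi\neq 1$. Therefore $\{1,\xi\}$ is an $\IF_q$-basis of $\IF_{q^2}$, so the $\IF_q$-linear endomorphisms of $\IF_{q^2}$ commuting with multiplication-by-$\xi$ are exactly the $\IF_{q^2}$-linear ones, namely multiplication by elements of $\IF_{q^2}^\times$. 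Intersecting with $\SL_2(\IF_q)$, i.e. imposing determinant~$1$: the determinant of multiplication-by-$\eta$ on $\IF_{q^2}$ as an $\IF_q$-space is the norm $N_{\IF_{q^2}/\IF_q}(\eta)=\eta^{q+1}$, so the condition $\det=1$ says precisely $\eta\in\mu_{q+1}$. Hence $C_G(g)=\mathbf{d'}(\mu_{q+1})=T'$, and as before $C_G(T')=T'$ since $T'$ is abelian and contains a non-identity element.

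I do not expect any genuine obstacle here: both parts reduce to a short eigenspace/centraliser computation, and the only points needing a little care are the characteristic-$2$ facts that $a=a^{-1}\Rightarrow a=1$ on $\mu_{q-1}$ and that $\mu_{q+1}\cap\IF_q^\times=\{1\}$, both of which follow from $\gcd(q-1,q+1)=1$ when $q$ is even. One could alternatively quote the standard structure theory of maximal tori in $\SL_2$ (Coxeter tori, \cite[Section 1.3--1.4]{BonBook}) almost verbatim, the only adjustment being these two coprimality remarks; but the self-contained linear-algebra argument above seems cleanest to record.
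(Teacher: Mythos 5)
Your proof is correct, and it is essentially the argument the paper has in mind: the paper gives no proof of this lemma, merely remarking that it is well known and follows by elementary arguments as in \cite[Sections 1.3 and 1.4]{BonBook}, and your direct centraliser computations (diagonal case by comparing matrix entries, non-split case via the commutant of $\IF_{q^2}$ acting on itself and the norm map) are exactly that, with the two characteristic-$2$ points ($a=a^{-1}\Rightarrow a=1$ and $\mu_{q+1}\cap\IF_q^\times=\{1\}$, both from $\gcd(q-1,q+1)=1$) correctly identified as the only adjustments needed. No gaps.
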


\begin{lem}{\ }
	\label{lem:omnibus}
		\begin{enumerate}[{\,\,{\rm(a)}}]
			\item  If $\ell \mid q-1$, then $S_\ell\in\Syl_{\ell}(G)$ and  $N_{G}(Q) = N_G(T) =  \langle T, \sigma \rangle =: N$ for any $1\lneq Q\leq S_{\ell}$.
			\item  If $\ell \mid q+1$, then $S_\ell'\in\Syl_{\ell}(G)$ and $N_{G}(Q)=N_{G}(T')= \langle T', \sigma' \rangle =: N'$ for any  $1\lneq Q\leq S_\ell'$.
		\end{enumerate}
\end{lem}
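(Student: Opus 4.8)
The plan is to reduce the lemma to two ingredients: an order count for the Sylow assertions, and the determination of the normaliser of the relevant maximal torus, after which the statement for an arbitrary non-trivial subgroup $Q$ follows by a characteristic-subgroup and centraliser argument built on Lemma~\ref{lem:1}.

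For the Sylow statements, I would use $|G| = q(q^2-1) = q(q-1)(q+1)$ together with $\gcd(q-1,q+1) = \gcd(q-1,2) = 1$ (as $q$ is even), so that the odd prime $\ell$ divides at most one of $q-1$, $q+1$ and never $q$. In case (a), $\ell\mid q-1$ forces the $\ell$-part of $|G|$ to equal the $\ell$-part of $q-1 = |T|$, which is $|S_\ell|$ since $S_\ell\in\Syl_\ell(T)$; hence $S_\ell\in\Syl_\ell(G)$. Case (b) is identical with $T'$, $q+1$, $S_\ell'$ in place of $T$, $q-1$, $S_\ell$.

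Next I would establish $N_G(T) = \langle T,\sigma\rangle$ and $N_G(T') = \langle T',\sigma'\rangle$. For the split torus, a direct matrix computation gives $\sigma\,\mathbf{d}(a)\,\sigma^{-1} = \mathbf{d}(a^{-1})$, so $\sigma\in N_G(T)$, and $\sigma\notin T$ (it is not diagonal), whence $\langle T,\sigma\rangle = T\rtimes\langle\sigma\rangle$ has order $2(q-1)$. Conversely, if $g\in N_G(T)$ then conjugation by $g$ carries the set of eigenlines of $\mathbf{d}(a)$ in $\IF_q^2$ to that of $g\mathbf{d}(a)g^{-1}\in T$; choosing (using $f\geq 2$) some $a\in\mu_{q-1}$ with $a\neq a^{-1}$, both matrices have the same two eigenlines $\IF_q e_1$, $\IF_q e_2$, which $g$ therefore permutes, and an element of $\SL_2(\IF_q)$ stabilising both lies in $T$ while one interchanging them has the form $\mathbf{d}(b)\sigma$; thus $g\in\langle T,\sigma\rangle$. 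The non-split case is formally the same after extending scalars to $\IF_{q^2}$, where $T'$ becomes diagonalisable with the two Galois-conjugate eigenlines of $\mathbf{d'}(\xi)$ for $\xi\in\mu_{q+1}$ with $\xi\neq\xi^{-1}$; here $\sigma' = \mathbf{d'}(F)$ satisfies $\sigma'\,\mathbf{d'}(\xi)\,\sigma'^{-1} = \mathbf{d'}(\xi^q) = \mathbf{d'}(\xi^{-1})$ and has order $2$ while $|T'|$ is odd, so $\langle T',\sigma'\rangle = T'\rtimes\langle\sigma'\rangle$ and the same eigenline argument gives $N_G(T') = \langle T',\sigma'\rangle$. (Alternatively one may invoke the corresponding computations of \cite[Sections 1.3 and 1.4]{BonBook}, which carry over to $q$ even with only cosmetic changes.) This bookkeeping, particularly for the non-split torus, is the only genuinely delicate point in the proof.

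Finally, I would deduce the general statement. Let $1\lneq Q\leq S_\ell$. Since $T$ is cyclic it has a unique subgroup of order $|Q|$, namely $Q$, so $Q$ is characteristic in $T$ and hence $N_G(T)\leq N_G(Q)$. Conversely, as $Q\neq 1$ we may pick $1\neq x\in Q\subseteq T$, and Lemma~\ref{lem:1}(a) gives $C_G(x) = T$; since $Q\leq T$ is abelian this yields $T\leq C_G(Q)\leq C_G(x) = T$, so $C_G(Q) = T$. Then for $g\in N_G(Q)$ one has $g\,C_G(Q)\,g^{-1} = C_G(gQg^{-1}) = C_G(Q) = T$, so $N_G(Q)\leq N_G(T)$. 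Combining the two inclusions gives $N_G(Q) = N_G(T) = \langle T,\sigma\rangle = N$, proving (a); part (b) follows verbatim with $T'$, $\sigma'$, $N'$ and Lemma~\ref{lem:1}(b).
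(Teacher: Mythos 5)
Your proof is correct. The paper itself gives no argument for this lemma, stating only that it is well known and follows from elementary arguments as in \cite[Sections 1.3 and 1.4]{BonBook} adapted to even $q$; your write-up (coprimality of $q-1$ and $q+1$ for the Sylow claims, the eigenline computation of $N_G(T)$ and $N_G(T')$, and the reduction of $N_G(Q)$ to $N_G(T)$ via $Q$ characteristic in $T$ together with $C_G(Q)=T$ from Lemma~\ref{lem:1}) supplies exactly the details that citation is meant to cover, so it is consistent with the paper's intended approach.
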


\vspace{2mm}
\subsection{Characters and conjugacy classes of $G$}
The conjugacy classes, the ordinary characters, and the character table of $G$ were known to Schur, and are given in~\cite[Sections I and II]{Burkhardt}. We use notation analogous to that used in~\cite{BonBook} for the case where $q$ is odd, however, as this is more convenient for our purposes.
In order to do this, we fix the following.

\begin{nota}{\ }
\begin{itemize}
\item[$\bullet$] Let  $1_G$ and $\St$ denote the trivial character and the  Steinberg character of $G$, respectively. 
\item[$\bullet$] Let $R : \IZ \Irr(T) \longrightarrow \IZ \Irr(G)$ and $R' : \IZ \Irr(T') \longrightarrow \IZ \Irr(G)$ 
denote Harish-Chandra induction and Deligne-Lusztig induction, respectively.
\item[$\bullet$] Set  $\Gamma :=  [(\mu_{q-1} \setminus \{1\})/\equiv]$, $\Gamma' :=  [(\mu_{q+1} \setminus \{1\})/\equiv]$. 
\item[$\bullet$] Fix the following set of representatives for the conjugacy classes of $G$
\[ \{I_2\} ~ \dot \cup ~\{u\} ~ \dot \cup ~ \{ \mathbf{d}(a) \mid a \in \Gamma \} ~ \dot \cup ~ \{\mathbf{d}'(\xi) \mid \xi \in \Gamma' \}\,, \]
where $ u := \left( \begin{smallmatrix} 1	& 1 \\ 0 & 1 \end{smallmatrix} \right)$ is an element of order~$2$.
\end{itemize}
\end{nota}

There are $q-2$ non-trivial characters $\alpha \in \Irr(T)$, all satisfying $R(\alpha) = R(\alpha^{-1}) \in \Irr(G)$, giving us $\frac{q-2}{2}$ irreducible characters in $\Irr(G)$. Similarly, the $q$ non-trivial characters $\theta \in \Irr(T')$ satisfy $R'(\theta) = R'(\theta^{-1}) \in \Irr(G)$, giving us $\frac{q}{2}$ irreducible characters of $G$. Hence,
\[	\Irr(G) =    \{ 1_G, \St \} \cup   \{ R(\alpha) \mid \alpha \in [T^\wedge/\equiv], \alpha \neq 1\}  \cup   \{ R'(\theta) \mid \theta \in [T'^\wedge / \equiv] , \theta \neq 1\},
\]
and the character table of $G$ is as given in Table~\ref{tab:CTBLsl2}. 	

\renewcommand*{\arraystretch}{1.4}
\begin{longtable}{|c||c|c|c|c|}
	\caption{Character table of $\SL_2(q)$ when $q$ is even.}\label{tab:CTBLsl2}	
	\\  \hline 
	&
	\begin{tabular}{c}
		\textbf{$I_2$ } 
	\end{tabular}
	
	&
	\begin{tabular}{c}
		\textbf{$\textbf{d}(a)$} \\
		\textbf{	$ a \in \Gamma$}
	\end{tabular}
	&
	\begin{tabular}{c}
		\textbf{$\textbf{d}'(\xi)$} \\
		\textbf{	$\xi \in \Gamma'$}
	\end{tabular}	
& 
\begin{tabular}{c}
\textbf{$u$} 
\end{tabular}
	\\ \hline \hline 			
	
		No. of classes 
		& $1$
		& $\frac{q- 2}{2}$
		& $\frac{q}{2}$
		& $1$
	\\ \hline 

		Order of $g$
		& $1$
		& $o(a)$
		& $o(\xi)$
		& $2$
		\\ \hline 
		Class size
		& 1
		& $q(q+1)$
		& $q(q-1)$
		& $(q-1)(q+1)$
		\\ \hline \hline
%
%
	$1_G$
	& $1$
	& $1$
	& $1$
	& $1$
	\\ \hline

	$\St$
	& $q$
	& $1$
	& $-1$
	& $0$
	\\ \hline
	
	$R(\alpha) ~ (\alpha \in [T^\wedge / \equiv], \alpha \neq 1)$ 
	& $q+1$
	& $\alpha(a) + \alpha(a^{-1})$
	& $0$
	& $1$ 
	\\ \hline
	
	$R'(\theta) ~ (\theta \in [T'^\wedge / \equiv], \theta \neq 1)$ 
	& $q-1 $
	& $0$
	& $-\theta(\xi) - \theta(\xi^{-1})$
	& $-1$
	\\ \hline
\end{longtable}

\enlargethispage{15mm}

\vspace{2mm}
\subsection{Characters and conjugacy classes of $N$ and $N'$}
\label{subsec:NandN'}
 
We adopt here notation for the character theory of $N$ and $N'$ analogous to the notation used in \cite[Sections~6.2.1 and~6.2.2]{BonBook} for the case in which $q$ is odd. First, we fix the following sets of representatives for the conjugacy classes of $N$ and $N'$, respectively. 
\[ \{I_2\} \cup \{\sigma\}\cup \{ \mathbf{d}(a) \mid a \in \Gamma \} \qquad \qquad \qquad \{I_2\} \cup \{\sigma'\}\cup \{ \mathbf{d'}(\xi) \mid \xi \in \Gamma' \}\]
The difference with the odd case is that when $q$ is even,  $T$ has no non-trivial $N$-invariant characters and $T'$ has no non-trivial $N'$-invariant characters. For $\alpha\in\Irr(T)\setminus\{1\}$ (respectively,  $\theta\in \Irr(T')\setminus\{1\}$), we let $\chi_{\alpha}$ be the unique element of  $\Irr(N)$ such that $\chi_\alpha = \Ind_T^N (\alpha) = \Ind_{T}^N (\alpha^{-1})$ (respectively we let $\chi'_{\theta}$  be the unique element of $\Irr(N')$ such that $\chi'_{\theta} = \Ind_{T'}^{N'}(\theta) = \Ind_{T'}^{N'}(\theta^{-1})$). We let $\varepsilon$ (respectively $\varepsilon'$) denote the  linear character of $N$ (respectively of $N'$) of order $2$. With this notation, the character tables of $N$ and $N'$ are as follows.

\renewcommand*{\arraystretch}{1.4}
\begin{longtable}{|c||c|c|c|c|}
	\caption{Character table of $N$.}\label{tab:CTBLN}		
	\\  \hline 
	&
	\begin{tabular}{c}
		\textbf{$I_2$ } 
	\end{tabular}
	&
	\begin{tabular}{c}
		\textbf{$\textbf{d}(a)$} \\
		\textbf{	$ a \in \Gamma$}
	\end{tabular}
	&
	\begin{tabular}{c}
		\textbf{$\sigma$} 
	\end{tabular}	
	
	\\ \hline \hline 		
	
	No. of classes 
	& $1$
	& $\frac{q-2}{2}$
	& $1$
	\\ \hline 
	
	Order of $g$
	& $1$
	& $o(a)$
	& $2$
	\\ \hline 
	
	Class size
	& $1$
	& $2$
	& $q-1$
	\\ \hline 
	
	$C_N(g)$
	& $N$
	& $T$
	& $\langle \sigma \rangle$
	\\ \hline \hline 
	
	$1_N$
	& $1$
	& $1$
	& $1$
	\\ \hline

	$\varepsilon$
	& $1$
	& $1$
	& $-1$
	\\ \hline
	
	$\chi_\alpha$
	& $2$
	& $\alpha(a) + \alpha(a^{-1})$
	& $0$
	\\ \hline
\end{longtable}

\renewcommand*{\arraystretch}{1.4}
\begin{longtable}{|c||c|c|c|c|}
	\caption{Character table of $N'$.}\label{tab:CTBLN'}	
	\\  \hline 
	&
	\begin{tabular}{c}
		\textbf{$I_2$ } 
	\end{tabular}
	&
	\begin{tabular}{c}
		$\mathbf{d'}(\xi)$ \\
		$ \xi \in \Gamma'$
	\end{tabular}
	&
	\begin{tabular}{c}
		\textbf{$\sigma'$} 
	\end{tabular}

	\\ \hline \hline 			
	
	No. of classes 
	& $1$
	& $\frac{q}{2}$
	& $1$
	\\ \hline 
	
	Order of $g$
	& $1$
	& $o(\xi)$
	& $2$
	
	\\ \hline 
	
	Class size
	& $1$
	& $2$
	& $q+1$
	
	\\ \hline 
	
	$C_{N'}(g)$
	& $N'$
	& $T'$
	& $\langle \sigma' \rangle$
	
	\\ \hline \hline

	$1_{N'}$
	& $1$
	& $1$
	& $1$
	\\ \hline

	$\varepsilon'$
	& $1$
	& $1$
	& $-1$
	
	\\ \hline
	
	$\chi'_\theta$
	& $2$
	& $\theta(\xi) + \theta(\xi^{-1})$
	& $0$
	
	\\ \hline
\end{longtable}

\vspace{3mm}
\section{Trivial source character table of $G$ when $\ell \mid q-1$}
\label{sec:tschartablelmidq-1}

\begin{nota}\label{rem:lmidq-1}
	
	In order to describe $\Triv_{\ell}(G)$ according to Convention~\ref{conv:tsctbl} we adopt the following notation. We fix $Q_{n+1}:=S_{\ell}\cong C_{\ell^{n}}$ and for each $1\leq i\leq n$ we let $Q_{i}$ denote the unique cyclic subgroup of $Q_{n+1}$ of order $\ell^{i-1}$. The chain of subgroups
	\[\{1\} = Q_1 \leq \dots  \leq Q_{n+1} \in \Syl_{\ell}(G)\]
	is then our fixed set of representatives for the conjugacy classes of $\ell$-subgroups of $G$. 
	
	We fix $\Gamma_{\ell'} = [ ((\mu_{q-1})_{\ell'} \setminus \{1\})/ \equiv]$ and $\Gamma'_{\ell'} = [ ((\mu_{q+1})_{\ell'} \setminus \{1\})/ \equiv]$. Note that here $\Gamma'_{\ell'} = \Gamma'$ as $\ell \nmid q+1$.  We fix the following set of representatives for  the $\ell'$-conjugacy classes of $G$:
	$$[G]_{\ell'}:=\{I_{2}\}\cup\{ u \} \cup  \{\mathbf{d}(a)\mid a\in\Gamma_{\ell'}\}\cup  \{\mathbf{d}'(\xi)\mid \xi\in\Gamma'_{\ell'}\}\,.$$
	For any $2\leq v\leq n+1$, $1\leq i\leq n+1$, the columns of $T_{i,v}$ are labelled by a set of representatives for the $\ell'$-conjugacy classes of $\overline N_{v} = N_G(Q_{v})/Q_{v}=N/Q_{v}$ as  $N_G(Q_v) = N_G(T) = N$ for each $2\leq v \leq n+1$ by Lemma~\ref{lem:omnibus}(a).  However, since $Q_{v}$ is an $\ell$-group we will simply label the columns of $T_{i,v}$ by the following fixed set of representatives for  the $\ell'$-conjugacy classes of $N$
	$$[N]_{\ell'}:=\{I_{2}\}\cup\{ \sigma \} \cup  \{\mathbf{d}(a)\mid a\in\Gamma_{\ell'}\}\,.$$	
	Moreover, in order to describe the exceptional characters occurring  as constituents of the trivial source characters, for each $0\leq  i \leq n$ we fix
	\[
	\pi_{q,i} :=\frac{(q-1)_\ell\cdot \ell^{-i} - 1}{2}\,,
	\]
	we let $\pi_q := \pi_{q,0}$, and note that 
	$\pi_{q,n}=0$. These numbers arise naturally from the classification of the trivial source modules in cyclic blocks in \cite{HL20}. 
\end{nota}

\vspace{2mm}
\subsection{The $\ell$-blocks and trivial source characters of $G$}
\label{subsec:Glmidq-1}

\begin{lem} {\label{lem:blockslmidq-1}}
	When $\ell \mid q-1$ the $\ell$-blocks of $G$, their defect groups and their Brauer trees with type function are as given in Table~\ref{tab:blockslmidq-1}.
\end{lem}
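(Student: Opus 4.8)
The plan is to determine the $\ell$-blocks of $G=\SL_2(2^f)$ for $\ell\mid q-1$ directly from the character table (Table~\ref{tab:CTBLsl2}) together with standard block-theoretic input. Since $\ell$ is odd and $\ell\mid q-1$, a Sylow $\ell$-subgroup $S_\ell$ of $G$ is cyclic by Lemma~\ref{lem:omnibus}(a), so every $\ell$-block of $G$ has cyclic defect groups, and in particular the principal block has full defect $D\cong C_{\ell^n}$ where $\ell^n=(q-1)_\ell$. First I would locate the blocks of non-maximal (in fact trivial or intermediate) defect: the semisimple characters $R'(\theta)$ with $\theta\in[T'^\wedge/\equiv]$, $\theta\ne 1$, have degree $q-1$, which is divisible by $(q-1)_\ell$, hence each such character is of $\ell$-defect zero and constitutes a block of defect zero on its own. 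This disposes of $\frac{q}{2}$ of the irreducible characters. The remaining characters $1_G$, $\St$, and the $R(\alpha)$ for $\alpha\in[T^\wedge/\equiv]$, $\alpha\ne 1$, must be distributed among the blocks of positive defect.

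Next I would use the action of $N_G(D)=N$ on $\Irr(T)$ to split the characters $R(\alpha)$. Writing $\alpha=\alpha_\ell\alpha_{\ell'}$ according to $T=S_\ell\times T_{\ell'}$, the central character / Brauer correspondence argument (equivalently, the fact that Harish-Chandra induction is compatible with the block decomposition and that the blocks of $T$ are indexed by $\Irr(T_{\ell'})$) shows that $R(\alpha)$ and $R(\beta)$ lie in the same $\ell$-block if and only if $\alpha_{\ell'}=\beta_{\ell'}^{\pm 1}$; the principal block collects $1_G$, $\St$, and all $R(\alpha)$ with $\alpha$ of $\ell$-power order. For each nontrivial $\ell'$-part there is a non-principal block of full defect $D$ containing exactly the $R(\alpha)$ with that fixed $\ell'$-part of $\alpha$ (there are $\ell^n$ choices of $\alpha_\ell$, giving $\frac{\ell^n-1}{2}$ exceptional characters together with a pair of non-exceptional ones coming from $\alpha_\ell\in\{1\}$ paired appropriately — I would count carefully here). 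The inertial index is $e=2$ because $N/T\cong C_2$ acts on $S_\ell$ by inversion (Lemma~\ref{lem:omnibus}(a)), so each block of full defect has Brauer tree a line with two edges and an exceptional vertex of multiplicity $\frac{\ell^n-1}{2}=\pi_q$ in the middle or at the end depending on signs.

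To pin down the shape of each Brauer tree and its planar embedding, I would compute the decomposition matrix from the known character table: the two ordinary characters $1_G$ and $\St$ restrict to the two Brauer characters of the principal block, and the Cartan/decomposition data force the tree to be the open polygon $1_G - \chi_\Lambda - \St$ (for the principal block) resp.\ $\varepsilon$-twisted analogues for the non-principal blocks, where $\chi_\Lambda=\sum_\lambda R(\alpha_\lambda)$ is the sum of the exceptional characters. The type function is then read off from the values at a generator $x$ of $D_1$: $1_G(x)=1>0$, $\St(x)=\St|_{T}$ evaluates to $1>0$ on nontrivial diagonal elements so $\St$ is positive, and $R(\alpha)(x)=\alpha(x)+\alpha(x^{-1})$ which is positive for the relevant $\alpha$ — I would verify the sign alternation along the tree is consistent, using that along any edge the two endpoint signs must differ. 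The non-principal blocks are handled identically after twisting by the sign character, giving the same tree shape with possibly interchanged positivity labels.

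The main obstacle I anticipate is not any single deep theorem but the bookkeeping: correctly matching the combinatorial parametrisation of the $R(\alpha)$ (which involves the identification $T\cong\mu_{q-1}$, the quotient by inversion, and the decomposition $\mu_{q-1}=S_\ell\times T_{\ell'}$) with the labelling of vertices of the Brauer trees and the values $\pi_{q,i}$, and making sure the planar embedding and type function recorded in Table~\ref{tab:blockslmidq-1} are exactly the ones dictated by Table~\ref{tab:CTBLsl2} and by Linckelmann's classification recalled in \S\ref{ssec:cycbl}. In particular the identification of $W(\bB)$ — here the trivial module, since $D_1\le T$ is a $2'$-group so $N_G(D_1)=N$ and the Brauer correspondent $\bb$ of $\bB$ in $N$ has trivial source — should be checked against the structure of $N$ given in Table~\ref{tab:CTBLN}. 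Once these identifications are fixed, every entry of Table~\ref{tab:blockslmidq-1} follows by direct inspection, and the lemma is proved.
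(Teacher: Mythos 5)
Your overall strategy --- reading the block distribution off the character table rather than quoting it --- is viable and in fact more self-contained than the paper's proof, which simply cites \cite{Burkhardt} for the block distribution, defect groups and Brauer trees, and only computes the type function (observing that $1_G$ is positive and that $R(\alpha)(x)=\alpha(x)+\alpha(x^{-1})=2>0$ for $x\in D_1$ since $\alpha$ is an $\ell'$-character). Your identification of the defect-zero blocks via $(q-1)_\ell\mid R'(\theta)(1)$ and your computation of the type function agree with the paper.

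There is, however, a concrete error in your treatment of the positive-defect blocks: you assert that the inertial index is $e=2$ for \emph{every} block of full defect, so that each such block has a two-edge Brauer tree with exceptional multiplicity $\frac{\ell^n-1}{2}=\pi_q$. This holds only for the principal block. The inertial index of a block with root block $b_\alpha$ of $C_G(D)=T$ is the order of the stabiliser of $b_\alpha$ in $N_G(D)/C_G(D)\cong C_2$; the involution $\sigma$ acts on $T^\wedge$ by inversion, so it stabilises the block of $T$ labelled by $\alpha\in T_{\ell'}^\wedge$ only when $\alpha=\alpha^{-1}$, i.e.\ only for $\alpha=1$, because $|T_{\ell'}|$ is odd. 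Hence each non-principal positive-defect block $A_\alpha$ has $e=1$: it is nilpotent, its Brauer tree is a single edge joining $R(\alpha)$ to the exceptional vertex $\Xi_\alpha$, and the exceptional multiplicity is $\ell^n-1$ (the characters $R(\alpha\eta)$, $\eta\in S_\ell^\wedge\setminus\{1\}$, are pairwise distinct in $\Irr(G)$ because $\alpha\eta$ and $\alpha\eta'$ are never related by inversion for $\eta\neq\eta'$), not $\frac{\ell^n-1}{2}$. Correspondingly, your parenthetical count (``a pair of non-exceptional ones'') is off: $A_\alpha$ has exactly one non-exceptional character, namely $R(\alpha)$. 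Only in the principal block do the constituents pair up under inversion \emph{within} the block, giving $\frac{\ell^n-1}{2}$ exceptional characters and the two-edge tree $1_G$ --- $\Xi$ --- $\St$ with the exceptional vertex in the middle. With this correction the remainder of your argument (sign alternation along the tree, positivity of $1_G$, $\St$ and $R(\alpha)$, triviality of $W(\bB)$) goes through and reproduces Table~\ref{tab:blockslmidq-1}.
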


\begin{longtable}{c||c|c|c}
	\caption{The $\ell$-blocks of $\SL_2(q)$ when $\ell \mid q-1$ and $q$ is even.}	
	\label{tab:blockslmidq-1}
	\\  
	Block $\bB$
	&	\begin{tabular}{c}
		Number of Blocks \\
		(Type)
	\end{tabular}
	& 	\begin{tabular}{c}
		Defect \\
		Groups
	\end{tabular}
	& 
	\begin{tabular}{c}
		Brauer Tree with Type  \\
		Function or $\Irr(\bB)$
	\end{tabular}
	\\ \hline \hline 			
	
	$\mathbf{B}_0(G)$ 
	& 	\begin{tabular}{c}
		1 \\
		\small{(Principal)} 
	\end{tabular} 
	& 	$C_{\ell^n}$ 
	&	\begin{tabular}{c}
		$$ \xymatrix@R=0.0000pt@C=30pt{
			{_+} & {_-}& {_+}\\
			{\Circle} \ar@{-}[r] 
			& {\CIRCLE} \ar@{-}[r] 
			&{\Circle} \\
			{^{1_G}}&{^{\Xi}}&{^{\St}}
		}$$\\
		{\footnotesize $\Xi:=\sum\limits_{\eta \in [S_{\ell}^\wedge / \equiv], \eta \neq 1}R(\eta)$}\\
	\end{tabular}
	\\ \hline 
	
	\begin{tabular}{c}
		$A_\alpha$ \\
		\footnotesize{($\alpha \in [T_{\ell'}^\wedge / \equiv], \alpha \neq 1$)}
	\end{tabular}
	& 	\begin{tabular}{c}
		$\frac{(q-1)_{\ell'} - 1}{2}$\\
		\small{(Nilpotent)} 
	\end{tabular}	
	& 	$C_{\ell^n}$ 		
	& 	\begin{tabular}{c}
		$  \xymatrix@R=0.0000pt@C=30pt{	
			{_+} & {_-}\\
			{\Circle} \ar@{-}[r] 
			& {\CIRCLE}    \\
			{^{R(\alpha)}}&{^{\Xi_\alpha}}
		}$ \\
		{\footnotesize $\Xi_{\alpha}  := \sum\limits_{\eta \in S_{\ell}^\wedge \setminus \{1\}} R(\alpha\eta)$}\\
	\end{tabular}
	\\ \hline 
	
	\begin{tabular}{c}
		$A'_\theta$  \\
		\footnotesize{($\theta \in [T_{\ell'}'^\wedge / \equiv], \theta \neq 1)$}
	\end{tabular}
	& 	\begin{tabular}{c}
		$\frac{(q+1)_{\ell'} - 1}{2} = \frac{q}{2}$ \\
		\small{(Defect zero)}
	\end{tabular} 
	& 	$\{1\}$ 
	& 	$\Irr(A'_\theta) = \{R'(\theta)\}$
	\\ \hline 
\end{longtable}

\begin{proof}
	All of the information in the table comes directly from \cite[Section I]{Burkhardt} and the character table of $G$ (Table~\ref{tab:CTBLsl2}), except for the type functions on the Brauer trees, which we compute according to (2) in \S\ref{ssec:cycbl}.  
	The trivial character is clearly positive so the type function for the principal block is immediate. For each block $A_{\alpha}$, the $\ell'$-character $\alpha$ takes the value $1$ on $\ell$-elements and therefore $R(\alpha)$ is positive.
\end{proof}

\enlargethispage{20mm}

\begin{lem}
	\label{lem:tsmodslmidq-1}
	When $\ell \mid q-1$ the ordinary characters $\chi_{\widehat{M}}$ of the trivial source $kG$-modules~$M$  are as given in Table~\ref{tab:tsmodslmidq-1}, where for each $1\leq i\leq n$, 
	\[\Xi_i = \sum\limits_{j=1}^{\pi_{q,i}} R(\eta_j)\]
	is a sum of  $\pi_{q,i}$ pairwise distinct exceptional characters  in $\bB_{0}(G)$, and  for any non-trivial character {$\alpha \in \Irr(T_{\ell'})$}, 
	\[\Xi_{\alpha,i} = \sum\limits_{j=1}^{2\pi_{q,i}} R(\alpha\eta_j)\]
	is a sum of $2\pi_{q,i}$ pairwise distinct exceptional characters in $A_{\alpha}$. 
\end{lem}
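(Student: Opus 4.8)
The plan is to apply the explicit classification of trivial source modules in cyclic blocks from \cite[Theorem~5.3]{HL20} (summarised in \cite[Remark~2.2]{BFL22}) block by block, using the three parameters — the Brauer tree, the type function, and the endo-permutation source $W(\bB)$ — recorded in Table~\ref{tab:blockslmidq-1}. Since $\ell \mid q-1$ and $q$ is even, $G$ has abelian (in fact cyclic) Sylow $\ell$-subgroups, so every $\ell$-block of $G$ is a \emph{nilpotent} block except we must be slightly careful: the principal block $\bB_0(G)$ has inertial index $e=2$ and Brauer tree a line with three vertices, while each $A_\alpha$ is nilpotent with $e=1$ and Brauer tree a line with two vertices; the blocks $A'_\theta$ have defect zero. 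For nilpotent blocks the source $W(\bB)$ is trivial (the block is Morita equivalent to $kC_{\ell^n}$), and for the principal block we also have $W(\bB_0(G))=k$ because its Brauer correspondent $\bb$ in $N_G(D_1)=N$ has normal defect group, so its simple modules have trivial source. Thus in all cases $W(\bB)$ is trivial, which simplifies the application of \cite[Theorem~5.3]{HL20} considerably: the trivial source modules with a given vertex $Q_i$ are determined purely by the planar-embedded tree and the type function.

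First I would treat the defect-zero blocks $A'_\theta$: here the unique simple module $R'(\theta)$ is projective, hence a trivial source module with vertex $\{1\}=Q_1$, and its ordinary character is $R'(\theta)$ itself. Next I would handle the principal block. With $e=2$, there are exactly two trivial source modules for each vertex $Q_i$ ($1\le i\le n+1$). For $Q_{n+1}=S_\ell$ (full defect) these are the two modules with the trivial module among them — their characters being $1_G$ and $\St$, read off as the two leaves of the line; for $Q_1=\{1\}$ they are the two projective indecomposables, whose characters are $1_G+\Xi$ and $\St+\Xi$ by walking the Brauer tree. For an intermediate vertex $Q_i$ with $2\le i\le n$, the classification produces trivial source modules whose ordinary characters pick up a specific partial sum of exceptional characters: the number $\pi_{q,i}=\tfrac{(q-1)_\ell\ell^{-i}-1}{2}$ counts exactly how many exceptional constituents $R(\eta_j)$ appear, because the exceptional vertex has multiplicity $|\Lambda|=\tfrac{|D|-1}{e}=\tfrac{(q-1)_\ell-1}{2}$ and the Brauer quotient at $Q_i$ "sees" the subtree corresponding to $D/Q_i\cong C_{\ell^{n-i+1}}$, contributing $\tfrac{\ell^{n-i+1}-1}{e}=\tfrac{(q-1)_\ell\ell^{-i}\cdot\ell-1}{2}$ minus the part already accounted for — I will need to chase the indexing in \cite[Theorem~5.3]{HL20} carefully to land on $\pi_{q,i}$ rather than an off-by-one variant, and to check $\pi_{q,n}=0$ matches the full-defect case. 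Finally, the blocks $A_\alpha$ are nilpotent with $e=1$, so there is exactly one trivial source module per vertex $Q_i$; the same walk along the two-vertex tree, with exceptional multiplicity $|\Lambda|=\tfrac{(q-1)_\ell-1}{1}$ this time (hence the factor $2\pi_{q,i}$), yields the characters $R(\alpha)$, $R(\alpha)+\Xi_{\alpha,i}$ for intermediate vertices, and $R(\alpha)$, $\Xi_{\alpha,i}$ (i.e. $\Xi_\alpha$) at the two extremes.

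The main obstacle I anticipate is bookkeeping rather than conceptual: correctly matching the combinatorial output of \cite[Theorem~5.3]{HL20} — which is phrased in terms of the planar embedding, the type function, and lengths of paths in the Brauer tree — to the concrete list of $R(\eta_j)$'s and $R(\alpha\eta_j)$'s, getting the counts $\pi_{q,i}$ and $2\pi_{q,i}$ exactly right, and verifying that the ``pairwise distinct'' exceptional characters can indeed be chosen consistently (i.e. the $\eta_j$ appearing for $Q_i$ form a subset of those for $Q_{i'}$ when $i\ge i'$, reflecting the nested structure of Brauer quotients). A secondary point requiring a line of justification is the claim $W(\bB)=k$ for every block in play; I would dispatch this by noting that for all these blocks the Brauer correspondent $\bb$ in $N_G(D_1)$ has a normal defect group, whence its simple modules are trivial source (their sources being the trivial module), so $W(\bB)$ is the trivial endo-permutation module. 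Once these points are pinned down, the statement follows by reading Table~\ref{tab:tsmodslmidq-1} directly off the tree-walk in each block.
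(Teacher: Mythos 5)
Your proposal is correct and follows essentially the same route as the paper: read off the defect-zero blocks and the PIMs from the Brauer trees, then apply the classification of \cite[Theorem~5.3]{HL20} block by block using the three source-algebra parameters from Table~\ref{tab:blockslmidq-1}, the only extra input being $W(\bB)=k$ (the paper cites \cite[Proposition~6.5(a)]{HL22}; your normal-defect-group argument for the Brauer correspondent in $N_G(D_1)$ is the same point). One bookkeeping slip to repair when you chase the indexing for $A_\alpha$: since $R(\alpha)$ labels the \emph{positive} vertex of $\sigma(A_\alpha)$, the unique trivial source $A_\alpha$-module with any given vertex has $R(\alpha)$ as a constituent of its ordinary character --- in particular the PIM has character $R(\alpha)+\Xi_\alpha$, not $\Xi_\alpha$ alone; characters consisting purely of exceptional constituents only arise in the $\ell\mid q+1$ case, where the type function on the two-vertex tree is reversed.
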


\begin{longtable}{|c|c|c|}
	\caption{Trivial source characters of $\SL_2(q)$ when $\ell \mid q-1$.}	
	\label{tab:tsmodslmidq-1}
	\\  \hline 
	\begin{tabular}{c}
		Vertices of $M$
	\end{tabular}
	& \begin{tabular}{c}
		Character $\chi_{\widehat{M}}$
	\end{tabular}
	& \begin{tabular}{c}
		Block containing $M$
	\end{tabular}
	\\ \hline \hline 
	
	\multirow{3}{*}{$\{1\}$}
	&$1_G + \Xi$,  
	$\St + ~ \Xi$
	&
	$\mathbf{B}_0(G)$
	\\

	&
	$R(\alpha) + \Xi_{\alpha}$
	&
	$A_{\alpha}$ ($\alpha \in [T_{\ell'}^\wedge / \equiv], \alpha \neq 1$)
	\\
	
	&
	$R'(\theta)$
	&
	$A'_{\theta}$ ($\theta \in [T_{\ell'}'^\wedge / \equiv], \theta \neq 1$)
	\\
	\hline \hline

	\multirow{2}{*}{
		\begin{tabular}{c}
			$C_{\ell^i}$ \\
			($1 \leq i < n$)
	\end{tabular}}
	&
	$1_G + \Xi_i$, $\St + ~ \Xi_i$
	&
	$\mathbf{B}_0(G)$
	\\

	&
	$R(\alpha) + \Xi_{\alpha, i}$
	&
	$A_{\alpha}$ ($\alpha \in [T_{\ell'}^\wedge / \equiv], \alpha \neq 1$)
	\\
	\hline \hline
	
	\multirow{2}{*}{
		$C_{\ell^n}$}
	&
	$1_G$, $\St$
	&
	$\mathbf{B}_0(G)$
	\\

	&
	$R(\alpha)$
	&
	$A_{\alpha}$ ($\alpha \in [T_{\ell'}^\wedge / \equiv], \alpha \neq 1$)
	\\
	\hline 	
\end{longtable}

\begin{proof}
First, the ordinary characters of the PIMs lying in blocks of defect zero are immediate from Table~\ref{tab:blockslmidq-1}, and the characters of the PIMs lying in blocks with a non-trivial cyclic defect group can also be read off from Table~\ref{tab:blockslmidq-1}, e.g. using~\cite[Remark~2.6(a)]{BFL22}.
\par
The trivial source $kG$-modules with non-trivial vertices $C_{\ell^{i}}$ ($1\leq i\leq n$) all belong to $\ell$-blocks $\bB$ with a non-trivial cyclic defect group.
By~\S\ref{ssec:cycbl}, each such block contains precisely $e$ trivial source $kG$-modules with vertex $C_{\ell^{i}}$, where $e$ is the inertial index of the block. Moreover, in order to determine these modules up to isomorphism, we need parameters (1), (2) and (3) of~\S\ref{ssec:cycbl}, namely the Brauer trees with their type function, which are given in~Table~\ref{tab:blockslmidq-1}, and the module $W(\bB)$, which is always trivial in our case by~\cite[Proposition~6.5(a)]{HL22}.
Thus, the characters $\chi^{}_{\widehat{M}}$ listed in Table~\ref{tab:tsmodslmidq-1} are obtained by applying the classification of the trivial source modules given in  \cite[Theorem~5.3(b)(2) and Theorem~A.1(d)]{HL20}, exactly as in~\cite[Lemma~3.3]{BFL22}.
\end{proof}

\vspace{4mm}
\subsection{The $\ell$-blocks and trivial source characters of $N$}
\label{subsec:N}

\begin{lem}
	\label{lem:blocksN}
	When $\ell \mid q-1$ the $\ell$-blocks of $N$, their defect groups and their Brauer trees with type function are as given in Table \ref{tab:blocksN}.
\end{lem}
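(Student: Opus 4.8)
The plan is to read everything off from the structure of $N=T\rtimes\langle\sigma\rangle$, where $T\cong\mu_{q-1}$ is cyclic of odd order and $\sigma$ inverts $T$ (Lemma~\ref{lem:omnibus}(a)), so that $N$ is a dihedral group of order $2(q-1)$, together with the character table of $N$ (Table~\ref{tab:CTBLN}). First I would observe that, as $T$ has odd order, $S_\ell$ is the unique and hence normal Sylow $\ell$-subgroup of $N$, so $O_\ell(N)=S_\ell\cong C_{\ell^n}$; since a defect group of any $\ell$-block lies between $O_\ell(N)$ and a Sylow $\ell$-subgroup, \emph{every} $\ell$-block of $N$ has defect group exactly $S_\ell\cong C_{\ell^n}$, and in particular $N$ has no block of defect zero and no block of intermediate defect.

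Next I would determine the distribution of $\Irr(N)=\{1_N,\varepsilon\}\cup\{\chi_\alpha\mid\alpha\in[T^\wedge/\equiv],\ \alpha\neq 1\}$ into blocks. Since $\ell\nmid[N:T]=2$, the $\ell$-blocks of $N$ correspond to the $\langle\sigma\rangle$-orbits on the $\ell$-blocks of the abelian group $T$, which in turn are indexed by $T_{\ell'}^\wedge$ (via restriction to the $\ell$-regular part $T_{\ell'}$), with $\sigma$ acting by inversion and fixing only the trivial character; using that $\chi_\alpha=\Ind_T^N\alpha$ lies over $\alpha$ and $\alpha^{-1}$ while $1_N,\varepsilon$ lie over $1_T$, one obtains exactly the blocks of Table~\ref{tab:blocksN}: the principal block $\bB_0(N)$, with $\Irr(\bB_0(N))=\{1_N,\varepsilon\}\cup\{\chi_\eta\mid\eta\in[S_\ell^\wedge/\equiv],\ \eta\neq 1\}$, and for each $\alpha\in[T_{\ell'}^\wedge/\equiv]$ with $\alpha\neq 1$ a block $\bb_\alpha$, with $\Irr(\bb_\alpha)=\{\chi_{\alpha\eta}\mid\eta\in S_\ell^\wedge\}$, a set of $(q-1)_\ell=\ell^n$ pairwise distinct characters (because $\alpha\neq\alpha^{-1}$); there are $\tfrac{(q-1)_{\ell'}-1}{2}$ of the latter. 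One can equally well see this directly by comparing central characters modulo $\fp$ in Table~\ref{tab:CTBLN}, using that $\omega^{}_{\chi_\alpha}$ takes the value $\alpha(a)+\alpha(a^{-1})$ on the class sum of $\mathbf{d}(a)$ and $0$ on that of $\sigma$, that $\omega^{}_{1_N}$ and $\omega^{}_{\varepsilon}$ take there the values $2$ and $\pm(q-1)\equiv 0\pmod{\fp}$, and that reduction modulo $\fp$ is injective on roots of unity of $\ell'$-order. For the inertial indices I would note that $\bb_\alpha$ covers a block of the abelian group $T$ whose inertia group in $N$ is $T$ (as $\sigma$ does not fix it), hence is Morita equivalent to that block, which is isomorphic to $\cO C_{\ell^n}$ and has a single simple module; thus $\bb_\alpha$ has inertial index $1$ and is nilpotent. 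And $\bB_0(N)$ has inertial index $2$, since it contains the two distinct simple modules $k$ and $k_\varepsilon$ while its inertial quotient embeds in $N_N(S_\ell)/S_\ell C_N(S_\ell)=N/T\cong C_2$.

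It remains to identify the Brauer trees with their type functions. Restricting characters to the $\ell$-regular classes of $N$ (Table~\ref{tab:CTBLN}), the two simple $kN$-modules in $\bB_0(N)$ are the reductions $k$ of $1_N$ and $k_\varepsilon$ of $\varepsilon$, and one reads off that $1_N$ and $\varepsilon$ restrict to $\varphi^{}_k$ and $\varphi^{}_{k_\varepsilon}$ respectively, while $\chi_\eta$ restricts to $\varphi^{}_k+\varphi^{}_{k_\varepsilon}$ for every $\eta\in S_\ell^\wedge\setminus\{1\}$; hence the Brauer tree of $\bB_0(N)$ is the path with $1_N$ and $\varepsilon$ as the two end vertices and the exceptional vertex $\sum_{\eta\in[S_\ell^\wedge/\equiv],\ \eta\neq 1}\chi_\eta$ (of multiplicity $\tfrac{(q-1)_\ell-1}{2}$) in the middle, exactly as for $\bB_0(G)$ in Table~\ref{tab:blockslmidq-1} with $\Ind_T^N$ in place of $R$. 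For $\bb_\alpha$ the unique simple $kN$-module is $\Ind_T^N\bar\alpha$, which is two-dimensional and simple because $\bar\alpha\neq\bar\alpha^{-1}$, so the Brauer tree is a single edge whose two vertices are labelled by $\chi_\alpha$ and by $\sum_{\eta\in S_\ell^\wedge\setminus\{1\}}\chi_{\alpha\eta}$. The step requiring the most care is that $\chi_\alpha$, not one of the other $\chi_{\alpha\eta}$, is the \emph{non}-exceptional vertex: since all $\ell^n$ characters of $\bb_\alpha$ agree on the $\ell$-regular classes, the decomposition matrix does not decide this, and I would argue it by Fong reduction, using that the Morita equivalence of $\bb_\alpha$ with the block of $\cO T$ containing $\alpha$ (isomorphic to $\cO C_{\ell^n}$, whose non-exceptional character is the trivial one) sends $\chi_\alpha=\Ind_T^N\alpha$ to the non-exceptional character. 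Finally the type function is obtained by evaluation at a generator $x$ of $D_1\leq S_\ell$: one has $1_N(x)=\varepsilon(x)=1>0$ and $\chi_\alpha(x)=\alpha(x)+\alpha(x^{-1})=2>0$ (as $\alpha$ is trivial on $S_\ell$), while the exceptional vertices evaluate to $\sum_{1\neq\eta\in S_\ell^\wedge}\eta(x)<0$ and $\sum_{1\neq\eta\in S_\ell^\wedge}\big(\eta(x)+\eta(x^{-1})\big)<0$ respectively, which yields the signs recorded in Table~\ref{tab:blocksN} and is consistent with the required alternation along each tree. Apart from the exceptional/non-exceptional labelling of the $\bb_\alpha$, everything is bookkeeping with Tables~\ref{tab:CTBLsl2}--\ref{tab:CTBLN} and the congruence $\ell\mid q-1$.
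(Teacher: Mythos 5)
Your proposal is correct and follows essentially the same route as the paper: distribute $\Irr(N)$ into blocks via central characters modulo $\fp$ (your Clifford-theoretic detour over $T$ is an equivalent reformulation), observe that the normal Sylow $\ell$-subgroup forces the defect groups and the star-shaped trees, and compute the type function from the signs of character values on a generator of $D_1$. The only divergence is at the non-exceptional vertex of $\bb_\alpha$, where you invoke Fong reduction to the block $\cO C_{\ell^n}$ of $\cO T$; the paper gets the same conclusion more cheaply from the standard fact that an $\ell$-rational character (here $\chi_\alpha$, and likewise $1_N$, $\varepsilon$) cannot be exceptional when $|\Lambda|>1$ — both arguments are valid.
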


\begin{longtable}{c||c|c|c}
	\caption{The $\ell$-blocks of $N$ when $\ell \mid q-1$ and $q$ is even.}	
	\label{tab:blocksN}
	\\  
	\begin{tabular}{c}
		Block \\
		$\bb$
	\end{tabular}
	& 	
	\begin{tabular}{c}
		Number of Blocks \\
		(Type)
	\end{tabular}
	&
	\begin{tabular}{c}
		Defect \\
		Groups
	\end{tabular}
	& 
	\begin{tabular}{c}
		Brauer Tree $\sigma(\bb)$ \\
		with Type Function
	\end{tabular}
	\\ \hline \hline 			
	
	$\bB_0(N)$  
	& 	\begin{tabular}{c}
		$1$ \\
		(Principal)
	\end{tabular}
	& 	$C_{\ell^n}$ 
	&	\begin{tabular}{c}
		$$ \xymatrix@R=0.0000pt@C=30pt{
			{_+} & {_-}& {_+}\\
			{\Circle} \ar@{-}[r]
			& {\CIRCLE} \ar@{-}[r] 
			&{\Circle} \\
			{^{1_N}}&{^{\Xi^N}}&{^{\varepsilon}}
		}$$\\
		{\footnotesize $\Xi^{N} := \sum\limits_{\eta \in [S_{\ell}^\wedge / \equiv], \eta \neq 1} \chi_{\eta}$}\\
	\end{tabular}
	\\ \hline

	\begin{tabular}{c}
		$\bb_\alpha$ \\
		$(\alpha \in [T_{\ell'}^\wedge / \equiv], \alpha \neq 1)$
	\end{tabular}
	&
	\begin{tabular}{c}
		$\frac{(q-1)_{\ell'} - 1}{2}$ \\
		(Nilpotent)
	\end{tabular}
	& 	$C_{\ell^n}$ 		
	& 	\begin{tabular}{c}
		$  \xymatrix@R=0.0000pt@C=30pt{	
			{_+} & {_-}\\
			{\Circle} \ar@{-}[r] 
			& {\CIRCLE}    \\
			{^{\chi_\alpha}}&{^{\Xi^{N}_{\alpha}}}
		}$\\
		{\footnotesize	$\Xi^{N}_{\alpha}  := \sum\limits_{ \eta \in S_{\ell}^\wedge \setminus \{1\}} \chi_{\alpha\eta}$ }\\  
	\end{tabular}
	\\ \hline 
	
\end{longtable}
\vspace{10mm}

\begin{proof}
	We determine the partition of $\Irr(N)$ into $\ell$-blocks of $N$ by examining the central characters of $N$ modulo $\ell$ and we find that:  
	\begin{itemize}
		\item $\Irr(\bB_0(N))$ contains $1_N$, $\varepsilon$ and $\{{\chi_{\eta}\mid \eta \in \Irr(S_{\ell}) \setminus \{1\}}\}$; and
		\item for each non-trivial $\alpha \in \Irr(T_{\ell'})$ there exists a block $\bb_\alpha$ containing $\chi_{\alpha \eta}$ for all $\eta \in \Irr(S_{\ell})$.
	\end{itemize}
	Since all the blocks have maximal normal defect groups, their Brauer trees are star-shaped with a central exceptional vertex (see e.g. \cite[Proposition 6.5.4]{BensonBookI}). The Brauer trees are therefore fully determined because the $\ell$-rational characters $1_N$, $\varepsilon$ and $\chi_\alpha$ ($\alpha \in \Irr(T_{\ell'})$) must be non-exceptional. The type functions, as defined in (2) of \S\ref{ssec:cycbl}, are immediate in this case, as the cyclic subgroups of order $\ell$ of the defect groups are normal in $N$ by Lemma~\ref{lem:omnibus}(a).	
\end{proof}

\begin{lem}
	\label{lem:tsmodsN}
	When $\ell \mid q-1$ 
	the ordinary characters $\chi_{\widehat{f(M)}}$ of the $kN$-Green correspondents $f(M)$ of  the trivial source $kG$-modules $M$ with a non-trivial vertex  are as given  in Table~\ref{tab:tsmodsN}, 
	where, for each $1\leq i\leq n$, 
	\[\Xi^{N}_i := \sum\limits_{j=1}^{\pi_{q,i}} \chi_{\eta_j}\]
	is a sum of  $\pi_{q,i}$ pairwise distinct exceptional characters  in $\Irr(\bB_{0}(N))$, and, for any non-trivial $\alpha \in \Irr(T_{\ell'})$, 
	\[\Xi^{N}_{\alpha, i} := \sum\limits_{j=1}^{2\pi_{q,i}} \chi_{\alpha\eta_j}\]
	is a sum of $2\pi_{q,i}$ pairwise distinct exceptional characters  in 
	$\Irr(\mathbf{b}_\alpha)$. 
\end{lem}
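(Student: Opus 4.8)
The plan is to derive Table~\ref{tab:tsmodsN} by applying the classification of trivial source modules in cyclic blocks to the blocks of $N$, in exactly the way Lemma~\ref{lem:tsmodslmidq-1} was proved for $G$, and then to recognise the resulting trivial source $kN$-modules as the Green correspondents. First I would set up the correspondence. By Table~\ref{tab:blockslmidq-1} every trivial source $kG$-module $M$ with non-trivial vertex lies in $\bB_0(G)$ or in one of the nilpotent blocks $A_\alpha$; the blocks $A'_\theta$ have defect zero and contribute nothing here. For $1\lneq Q\leq S_\ell$ we have $N_G(Q)=N$ by Lemma~\ref{lem:omnibus}(a), so the Green correspondence $f$ relative to $(G,N,Q)$ is defined, preserves the trivial source property, and commutes with the Brauer construction at $Q$; it therefore restricts to a bijection $\TS(G;Q)\longrightarrow\TS(N;Q)$, and since an element of $\TS(N;Q)$ is determined up to isomorphism by its Brauer quotient at $Q$ (see \cite[\S2]{BFL22}), $f(M)$ is the unique element of $\TS(N;Q)$ with $f(M)[Q]\cong M[Q]$ as $k(N/Q)$-modules.

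Next I would match up the blocks. Since $S_\ell\trianglelefteq N$ is a Sylow $\ell$-subgroup, every $\ell$-block of $N$ has defect group $S_\ell$, so by Brauer's first main theorem there is a bijection between the blocks $\bB_0(G)$ and $A_\alpha$ of $G$ and the blocks $\bB_0(N)$ and $\bb_\alpha$ of $N$ listed in Table~\ref{tab:blocksN}, under which $f(M)$ always lies in the block of $N$ corresponding to the block of $M$. The principal blocks correspond to one another; and comparing the maximal Brauer pairs over $C_G(S_\ell)=C_N(S_\ell)=T$ (Lemma~\ref{lem:1}(a)), equivalently the central characters modulo $\ell$, shows $A_\alpha\leftrightarrow\bb_\alpha$ for every non-trivial $\alpha\in\Irr(T_{\ell'})$. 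Hence the Green correspondents of the trivial source $kG$-modules with non-trivial vertex in $\bB_0(G)$, resp.\ in $A_\alpha$, are exactly the trivial source $kN$-modules with non-trivial vertex in $\bB_0(N)$, resp.\ in $\bb_\alpha$.

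It then remains to run the classification for $N$. For each of the blocks $\bB_0(N)$ and $\bb_\alpha$ the three parameters of \S\ref{ssec:cycbl} are available from Table~\ref{tab:blocksN}: the defect group is $S_\ell\cong C_{\ell^n}$, the Brauer trees with type functions are the ones given there (the line $1_N\!-\!\Xi^N\!-\!\varepsilon$ with signs $+,-,+$, resp.\ the edge $\chi_\alpha\!-\!\Xi^N_\alpha$ with signs $+,-$), and the endo-permutation module is trivial, by \cite[Proposition~6.5(a)]{HL22}, just as for the blocks of $G$ in the proof of Lemma~\ref{lem:tsmodslmidq-1}. Applying \cite[Theorem~5.3(b)(2) and Theorem~A.1(d)]{HL20} (see also \cite[Remark~2.2]{BFL22}) to these blocks produces the trivial source $kN$-modules with non-trivial vertex, together with their ordinary characters and their Brauer quotients; matching the latter with those of the $M$ (obtained from the same theorem applied to the blocks of $G$) identifies them as the $f(M)$. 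Concretely the characters $\chi_{\widehat{f(M)}}$ are obtained from the characters $\chi_{\widehat{M}}$ in Table~\ref{tab:tsmodslmidq-1} by the substitutions $1_G\mapsto 1_N$, $\St\mapsto\varepsilon$, $R(\alpha)\mapsto\chi_\alpha$, $\Xi_i\mapsto\Xi^N_i$ and $\Xi_{\alpha,i}\mapsto\Xi^N_{\alpha,i}$, the multiplicities $\pi_{q,i}$ of exceptional constituents being unchanged because $\bB$ and its Brauer correspondent share both defect group and inertial index; this is Table~\ref{tab:tsmodsN}.

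The step I expect to be the main obstacle is this last matching, inside the principal block, where the inertial index is $2$: one must be sure that the module with character $1_G+\Xi_i$ (and not the one with character $\St+\Xi_i$) corresponds to the module with character $1_N+\Xi^N_i$. This is precisely the compatibility of the parametrisation of \cite{HL20} with the planar embedding of the Brauer tree; concretely it can be checked using that $f$ commutes with the Brauer construction, so that $M$ and $f(M)$ have the same Brauer quotient at $Q$ --- a projective module for $N/Q$ lying in its principal block --- and on that level one verifies that $1_G+\Xi_i$ and $1_N+\Xi^N_i$ are the modules whose Brauer quotient is the projective cover of the trivial module. The identification $A_\alpha\leftrightarrow\bb_\alpha$ needs the small Brauer-pair comparison indicated above; with these two points settled, the lemma is a direct consequence of \cite[Theorem~5.3]{HL20}.
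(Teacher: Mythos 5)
Your proposal follows essentially the same route as the paper: determine the Brauer correspondents in $N$ of the blocks of $G$, use that Green correspondence commutes with Brauer correspondence, apply \cite[Theorem~5.3]{HL20} to the blocks of $N$ (with $W(\bb)=W(\bB)$ trivial), and then break the remaining ambiguity. Two sub-steps differ in execution. First, for the identification $A_\alpha\leftrightarrow\bb_\alpha$ the paper actually carries out the computation you defer: it writes down the central primitive idempotents $i_\alpha$ and $i_\alpha^N$, evaluates their coefficients on $T=C_G(S_\ell)$ using orthogonality, and checks $\bar i_\alpha^N=\Br_{S_\ell}(\bar i_\alpha)$; the hypothesis $\ell\mid q-1$ enters precisely there, via $q\equiv 1 \bmod \fp$. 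Your appeal to ``comparing central characters modulo $\ell$'' is the same computation in disguise and is fine as a plan, but it is the substantive content of the proof rather than a routine remark. Second, for the tie-break in the principal block (distinguishing $1_G+\Xi_i$ from $\St+\Xi_i$) the paper uses the cleaner observation that having $1_G$ as a constituent of $\chi_{\widehat M}$ is inherited by the Green correspondent; your alternative via Brauer quotients is, as literally stated, circular on the $G$-side, since $M[Q]$ \emph{is} the Green correspondent whose identity you are trying to pin down --- it only becomes a genuine check if you extract $M[Q]$ independently from the parametrisation of \cite[Theorem~5.3]{HL20} applied to $\bB_0(G)$, which you do gesture at. With that caveat the argument goes through and reaches the same conclusion.
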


\begin{longtable}{|c|l|l|}
	\caption{The trivial source characters of the $kN$-Green correspondents when $\ell \mid q-1$.}	
	\label{tab:tsmodsN}
	\\  \hline 
	\begin{tabular}{c}
		Vertices of $M$
	\end{tabular}
	& \begin{tabular}{c}
		Character $\chi_{\widehat{M}}$
	\end{tabular}
	& \begin{tabular}{c}
		Character $\chi_{\widehat{f(M)}}$ of the  \\Green Correspondent 
	\end{tabular}
	\\ \hline \hline

	
	\begin{tabular}{c}
		$C_{\ell^i}$ \\
		($1 \leq i < n$)
	\end{tabular} 
	& 	\begin{tabular}{l}
		$1_G + \Xi_i$\\ 
		$\St + \Xi_i$\\ 
		$R(\alpha) + \Xi_{\alpha, i}$ ($\alpha \in [T_{\ell'}^\wedge / \equiv], \alpha \neq 1$) 
	\end{tabular}
	& \begin{tabular}{l}
		$1_N + \Xi^N_i$\\ 
		$\varepsilon + \Xi^N_i$\\ 
		$\chi_\alpha + \Xi^{N}_{\alpha, i}$
	\end{tabular}
	
	\\ 	\hline 	 \hline 
	
	$C_{\ell^n}$
	& 
	\begin{tabular}{l}
		$1_G$\\
		$\St$\\
		$R(\alpha)$ ($\alpha \in [T_{\ell'}^\wedge / \equiv], \alpha \neq 1$)
	\end{tabular}
	&
	\begin{tabular}{l}
		$1_N$ \\
		$\varepsilon$ \\
		$\chi_{\alpha}$
	\end{tabular}
	\\  \hline 
	
\end{longtable}

\begin{proof}	
	We first determine the Brauer correspondents in $N$ of the blocks of $G$. 
	We claim that for a fixed $\alpha \in T_{\ell'}^\wedge$, $\alpha \neq 1$, the block $\bb_\alpha$ is the Brauer correspondent in $N$ of the $A_\alpha$. Let $i_\alpha$ be the central primitive idempotent of $\mathcal{O}G$ such that $\mathcal{O}Gi_\alpha$ is the block of $\mathcal{O}G$ corresponding to the $\ell$-block $A_\alpha$. Thus,
	\[ i_\alpha : = \sum_{\eta \in S_{\ell}^\wedge}  \sum_{g \in G} \frac{1}{|G|} R(\alpha \eta)  (1) R(\alpha \eta)(g) g^{-1}
	= \sum_{g \in G}  \frac{q+1}{|G|} \sum_{\eta \in S_{\ell}^\wedge} R(\alpha \eta)(g) g^{-1}
	\]
	When we apply the Brauer homomorphism to $\bar i_\alpha$, the image of $i_\alpha$ in $kG$, the only terms in the sum which survive are those for $g \in C_{G}(S_\ell) = T$. The coefficient in $i_\alpha$ of a non-trivial element $\mathbf{d}(a^{-1}) \in T$ for some $a \in \mu_{q-1} \setminus \{1\}$ is 
	\begin{align*}
		\frac{q+1}{|G|} \sum_{\eta \in S_{\ell}^\wedge} R(\alpha \eta) (\mathbf{d}(a)) 
		& = \frac{q+1}{|G|} \left(\alpha(a)\sum_{\eta \in S_{\ell}^\wedge} \eta(a) + \alpha(a^{-1})\sum_{\eta \in S_{\ell}^\wedge}\eta(a^{-1}) \right). 
	\end{align*}
	If $a$ has non-trivial $\ell$-part then the second orthogonality relations show that ${\sum_{\eta \in S_{\ell}^\wedge} \eta(a) = 0}$. Thus the only elements in $T$ with non-zero coefficients in $i_\alpha$ are the $\ell'$-elements and they have coefficients
	\[
	\left\{	
	\begin{array}{ll}
		\frac{(q+1)^2 |S_{\ell}|}{|G|} = \frac{q+1}{q |T_{\ell'}|}
		& \mbox{if } a = 1, \mbox{and} \\
		\frac{(q+1)|S_{\ell}|}{|G|} \left(\alpha(a)+ \alpha(a^{-1}) \right) = \frac{1}{q |T_{\ell'}|} \left(\alpha(a)+ \alpha(a^{-1}) \right)
		& \mbox{if } 1 \neq a \in (\mu_{q-1})_{\ell'}.
	\end{array} \right. 
	\]
	
	Now let $i^N_\alpha$ denote the central primitive idempotent of $\mathcal{O}N$ such that $\mathcal{O}Ni^N_\alpha$ is the block of $\mathcal{O}N$ corresponding to the $\ell$-block $\bb_\alpha$. Then 
	\[ i_\alpha^N = \sum_{\eta \in S_{\ell}^\wedge}  \sum_{n \in N} \frac{1}{|N|} \chi_{\alpha \eta}  (1) \chi_{\alpha \eta} (n) n^{-1}
	= \sum_{n \in N}  \frac{2}{|N|} \sum_{\eta \in S_{\ell}^\wedge}  \chi_{\alpha \eta} (n) n^{-1}
	.\]
	Since $\chi_{\alpha\eta}$ is linear and $\chi_{\alpha\eta} (\sigma) = 0$, in fact this sum only has non-zero terms for $n \in T$. By the same arguments as above, the only elements $\mathbf{d}(a^{-1}) \in T$ with non-zero coefficients in $i^N_\alpha$ are the $\ell'$-elements and they have coefficients
	\[
	\left\{	
	\begin{array}{ll}
		\frac{4|S_{\ell}|}{|N|} = \frac{2}{|T_{\ell'}|}
		& \mbox{if } a = 1, \mbox{and}\\
		\frac{2|S_{\ell}|}{|N|} \left( \alpha(a) + \alpha(a^{-1})\right) = \frac{1}{|T_{\ell'}|}\left( \alpha(a) + \alpha(a^{-1})\right)
		& \mbox{if } 1 \neq a \in (\mu_{q-1})_{\ell'}.
	\end{array} \right. 
	\]
	
	Let $\bar i^N_\alpha$ denote the image of $i_\alpha^N$ in $kN$. 
	Then since $\ell \mid q-1$, we have $q \equiv 1 \mod \mathfrak{p}$. Therefore $ \bar i^N_\alpha \equiv \Br_{S_{\ell}}(\bar i_\alpha) \mod \mathfrak{p}$, where $\Br_{S_{\ell}}$ denotes the Brauer homomorphism. In particular, $\bb_\alpha$ is the Brauer correspondent in $N$ of $A_\alpha$. 
	\par  
	Next, we recall that the Brauer correspondence and the Green correspondence commute, so if a trivial source $kG$-module lies in the block $\bB$, then its Green correspondent lies in the Brauer correspondent $\bb$ of $\bB$. Moreover, by definition, $W(\bb)=W(\bB)$, which as we already noticed is trivial in all cases. 
	Therefore the characters of the trivial source $\bb$-modules can be determined using \cite[Theorem~5.3]{HL20} in all cases, as we did for $G$. This yields the list of characters in the third column of Table~\ref{tab:tsmodsN}, up to reordering. Therefore, it only remains to check that the characters printed on the same lines of the second and third columns are the characters of Green correspondent modules. For $\bB_{0}(G)$ it is enough to notice that if a trivial source module of $kG$ has the trivial character as a constituent of its ordinary character, then so does its $kN$-Green correspondent. Since both $A_\alpha$ and $\bb_\alpha$ have to contain a unique trivial source module with a given vertex, there is only one possibility for the blocks of type $A_\alpha$, as required. 
\end{proof}

\color{black}

\vspace{4mm}

\subsection{The trivial source character table of $G$}

\begin{thm}\label{thm:l|q-1} 
Let $G= \SL_2(q)$ with $q = 2^f$ for an integer $f \geq 2$ and suppose that $\ell \mid q-1$.  
Then, with notation as in Notation~\ref{rem:lmidq-1}, 
	the trivial source character table $\Triv_{\ell}(G)= [T_{i,v}]_{1\leq i,v\leq n+1}$ is given as follows:
	\begin{enumerate}[{\,\,\rm(a)}] \setlength{\itemsep}{2pt}
		\item $T_{i,v} = \mathbf{0}$ if $v > i$;
		\item the matrices $T_{i,1}$ are as given in Table \ref{tab:lmidq-1T_i1} for each $1 \leq i \leq n+1$;
	        \item the matrices $T_{i,i}$ are as given in Table \ref{tab:lmidq-1T_ii} for each $2 \leq i \leq n+1$; and 
		\item $T_{i,v} = T_{i,i}$ for all $2 \leq v < i \leq n+1$.
	\end{enumerate} 
\end{thm}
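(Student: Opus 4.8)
The plan is to reduce all four assertions to data already in hand: the ordinary characters of the trivial source $kG$-modules (Lemma~\ref{lem:tsmodslmidq-1}), those of their $kN$-Green correspondents (Lemma~\ref{lem:tsmodsN}), and the character tables of $G$ and $N$ (Tables~\ref{tab:CTBLsl2} and~\ref{tab:CTBLN}). Part~(a) is immediate: for $M\in\TS(G;Q_i)$ and $v>i$ one has $|Q_v|>|Q_i|$, so no $G$-conjugate of $Q_v$ lies in $Q_i$; hence the Brauer quotient $M[Q_v]$ vanishes and $T_{i,v}=\mathbf 0$. For part~(b), since $Q_1=\{1\}$ we have $M[Q_1]=M$ and $\tau^G_{Q_1,s}([M])=\varphi_M(s)=\chi_{\widehat M}(s)$ for $s\in[G]_{\ell'}$, because the Brauer character of a trivial source module is the restriction of $\chi_{\widehat M}$ to the $\ell'$-classes; so I would obtain $T_{i,1}$ by restricting the characters listed in Table~\ref{tab:tsmodslmidq-1} to $[G]_{\ell'}$ and reading the values off Table~\ref{tab:CTBLsl2}. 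The exceptional summands $\Xi,\Xi_\alpha,\Xi_i,\Xi_{\alpha,i}$ cause no trouble, since exceptional characters of a cyclic block agree on $\ell'$-classes. This produces Table~\ref{tab:lmidq-1T_i1}.

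The core is the reduction behind parts~(c) and~(d), carried out inside $N$. First I would record the structural input: by Lemma~\ref{lem:1}(a) each non-trivial element of $T$ has centraliser exactly $T$, so $T$, and hence each $Q_i$ with $i\geq2$, is a trivial-intersection subgroup of $G$ with normaliser $N$ (Lemma~\ref{lem:omnibus}(a)), and $Q_i\trianglelefteq N$ because $\sigma$ inverts $T$. Consequently, for $M\in\TS(G;Q_i)$ the Green correspondence with respect to $(G,Q_i,N)$ gives $\Res^G_N M\cong f(M)\oplus(\text{projective }kN\text{-module})$, whence $M[Q_v]\cong f(M)[Q_v]$ as $k\overline N_v$-modules for every $2\leq v\leq i$ (the Brauer quotient of a projective module at a non-trivial $\ell$-subgroup being zero). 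Next, from $f(M)\mid\Ind_{Q_i}^N k$ and $Q_i\trianglelefteq N$ it follows that $\Res^N_{Q_i}f(M)$ is a direct sum of trivial modules; so $Q_v$ acts trivially on $f(M)$, the relative traces $\tr_R^{Q_v}$ ($R<Q_v$) all vanish, and $f(M)[Q_v]$ equals $f(M)$ with $N/Q_v$-action inflated from~$N$. Therefore, for $s\in[\overline N_v]_{\ell'}$ with $\ell$-regular lift $\widetilde s\in[N]_{\ell'}$,
\[
\tau^G_{Q_v,s}([M])=\varphi_{M[Q_v]}(s)=\varphi_{f(M)[Q_v]}(s)=\chi_{\widehat{f(M)}}(\widetilde s),
\]
which is independent of $v\in\{2,\dots,i\}$. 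This settles~(d) outright and reduces~(c) to evaluating the Green-correspondent characters of Lemma~\ref{lem:tsmodsN} on $[N]_{\ell'}$ via Table~\ref{tab:CTBLN} (again the exceptional sums $\Xi^N_i$, $\Xi^N_{\alpha,i}$ are harmless), which gives Table~\ref{tab:lmidq-1T_ii}.

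I expect the main obstacle to be this local/structural step: verifying the trivial-intersection property of the $Q_i$ and, above all, checking that the vertex of each Green correspondent appearing in Lemma~\ref{lem:tsmodsN} acts trivially on it, so that its Brauer quotients are computed by plain inflation. Once that is secured, filling in Tables~\ref{tab:lmidq-1T_i1} and~\ref{tab:lmidq-1T_ii} is a routine evaluation of characters that are already known.
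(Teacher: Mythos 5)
Your proposal is correct and follows essentially the same route as the paper: parts (a) and (b) are the standard reductions, and (c), (d) reduce to evaluating the Green-correspondent characters of Lemma~\ref{lem:tsmodsN} on the character table of $N$ via the identification of $M[Q_v]$ with the $kN$-Green correspondent $f(M)$. The only difference is that where the paper imports the key facts (vanishing of $M[Q_v]$ for $v>i$, and $M[Q_v]\cong f(M)$ for $2\leq v\leq i$) from \cite{BFL22}, you prove them directly using the trivial-intersection property of the $Q_i$, Green correspondence, and the normality of $Q_i$ in $N$ --- a correct, self-contained substitute for those citations.
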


\begin{landscape}
\thispagestyle{plain}
	
	\renewcommand*{\arraystretch}{1.5}
	\begin{longtable}{c|c||c|c|c|c|}
		\caption{$T_{i,1}$ for $1 \leq i \leq n+1$.}	
		\label{tab:lmidq-1T_i1}
		\\  \cline{2-6}
		
		&&
		\begin{tabular}{c}
			\textbf{$I_2$ }
		\end{tabular}
		& 
		\begin{tabular}{c}
			$\mathbf{d}(a)$ \, ($a \in \Gamma_{\ell'}$) 
		\end{tabular}
		&
		\begin{tabular}{c}
			$\mathbf{d}'(\xi)$ \, ($ \xi \in \Gamma'_{\ell'}$) 
		\end{tabular}
		&
		\begin{tabular}{c}
			\textbf{$u$}
		\end{tabular}
		\\ \hline \hline

		\multirow{4}{*}[0ex]
		{
			$T_{1,1}$}
		& $1_G + \Xi$
		& $1 + (q+1)\pi_q$
		& $1 + 2\pi_q$
		& $1$
		& $1 + \pi_q$
		\\ \cline{2-6}

		&$\St + \, \Xi$
		& $q + (q+1)\pi_q$
		& $1 + 2\pi_q$
		& $-1$
		& $\pi_q$
		\\ \cline{2-6}
		

		&\begin{tabular}{c}
			$R(\alpha) + \Xi_{\alpha}$ ~ \small{$\left(\alpha \in [T_{\ell'}^\wedge / \equiv], \alpha \neq 1\right)$} 
			
		\end{tabular}
		& $(q+1)(1 + 2\pi_q)$
		& $(\alpha(a) + \alpha(a^{-1}))(1 + 2\pi_q)$
		& $0$
		& $1 + 2\pi_q$
		\\ \cline{2-6}
		

		&\begin{tabular}{c}
			$R'(\theta)$ ~ \small{$\left(\theta \in [T_{\ell'}'^\wedge / \equiv], \theta \neq 1\right)$}
			
		\end{tabular}
		& $q-1$
		& $0$
		& $-\theta(\xi) - \theta(\xi^{-1})$
		& $-1$
		\\ 
		\hline
		\hline

		\multirow{3}{*}[0ex]{
			\begin{tabular}{c}
				$T_{ i,1}$\\
				$(1 \leq i \leq  n)$
			\end{tabular}}
		&	$1_G + \Xi_{i-1}$
		& $1 + (q+1)\pi_{q,i-1}$
		& $1 + 2\pi_{q,i-1}$
		& $1$
		& $1 + \pi_{q,i-1}$
		\\ \cline{2-6}

		&		
		$\St +\,  \Xi_{i-1}$
		& $q + (q+1)\pi_{q,i-1}$
		& $1 + 2\pi_{q,i-1}$
		& $-1$
		& $\pi_{q,i-1}$
		\\ \cline{2-6}


		&\begin{tabular}{c}
			$R(\alpha) + \Xi_{\alpha,i-1}$ ~ \small{$\left(\alpha \in [T_{\ell'}^\wedge / \equiv], \alpha \neq 1\right)$}	

		\end{tabular}
		& $(q+1)(1 + 2\pi_{q,i-1})$
		& $(\alpha(a) + \alpha(a^{-1}))(1 + 2\pi_{q,i-1})$
		& $0$
		& $1 + 2\pi_{q,i-1}$
		\\ 
		\hline  \hline 
		
	\multirow{3}{*}[0ex]{
			$T_{n+1,1}$}
		&$1_G$
		& $1$
		& $1$
		& $1$
		& $1$
		\\ \cline{2-6}

		&$\St$
		& $q$
		& $1$
		& $-1$
		& $0$
		\\ \cline{2-6}


		&\begin{tabular}{c}
			$R(\alpha)$ ~ \small{$\left(\alpha \in [T_{\ell'}^\wedge / \equiv], \alpha \neq 1\right)$}
			
		\end{tabular}
		& $q+1$
		& $\alpha(a) + \alpha(a^{-1})$
		& $0$
		& $1$
		\\ \hline

	\end{longtable}


\renewcommand*{\arraystretch}{1.5}
\begin{longtable}{|c||c|c|c|c|}
	\caption{$T_{i,i}$ for $2 \leq  i \leq n+1$.} 
	\label{tab:lmidq-1T_ii}
	\\  \cline{1-5}
	
	&
	\begin{tabular}{c}
		\textbf{$I_2$ }
	\end{tabular}
	& 
	\begin{tabular}{c}
		$\mathbf{d}(a)$ \,
		($a \in \Gamma_{\ell'}$)
	\end{tabular}
	&
	\begin{tabular}{c}
		\textbf{$\sigma$} 
	\end{tabular}
	\\ \hline \hline 			
	
	$1_G + \Xi_{i-1}$
	& $1 + 2\pi_{q, i-1}$
	& $1 + 2\pi_{q, i-1}$
	& $1$
	\\ \hline

	$\St +  \Xi_{i-1}$
	& $1 + 2\pi_{q, i-1}$
	& $1 + 2\pi_{q, i-1}$
	& $-1$
	\\ \hline

	\begin{tabular}{c}
		$R(\alpha) + \Xi_{\alpha, i-1}$	~ \small{$\left(\alpha \in [T_{\ell'}^\wedge / \equiv], \alpha \neq 1\right)$}	
		
	\end{tabular}
	& $2(1 + 2\pi_{q, i-1})$
	& $(\alpha(a) + \alpha(a^{-1}))(1 + 2\pi_{q, i-1})$
	& $0$
	\\ \hline 
	
	\hline

\end{longtable}

\end{landscape}

\begin{proof}

By Convention~\ref{conv:tsctbl} the labels for the rows of $\Triv_{\ell}(G)$ are the ordinary characters of the trivial source $kG$-modules determined in Lemma~\ref{lem:tsmodslmidq-1}. 
\begin{enumerate}[(a)]
   \item It follows from \cite[Remark~2.5(c)]{BFL22} and Notation~\ref{rem:lmidq-1} that $T_{i,v} = \mathbf{0}$ \smallskip whenever~$v > i$. 
   \item By \cite[Remark~2.5(d)]{BFL22}, the values in  $T_{i,1}$  for $1 \leq i \leq n+1$  (Table~\ref{tab:lmidq-1T_i1}) are calculated by evaluating the character of each trivial source module given in Table \ref{tab:tsmodslmidq-1} at the relevant representatives of the $\ell'$-conjugacy classes of $G$ using the character table of~$G$  \smallskip  (Table \ref{tab:CTBLsl2}). 
\item By Convention~\ref{conv:tsctbl},  the values in $T_{i,i}$ for $2 \leq i \leq n+1$ (Table \ref{tab:lmidq-1T_ii}) are given by the values of the species~$\tau_{Q_{i},s}^{G}$, with $s$ running through  $[\overline{N}_{i}]_{\ell'}$ (identified here with $[N]_{\ell'}$), evaluated at the  trivial source modules $[M]\in\TS(G;Q_{i})$.  By definition of the species and~\cite[Proposition~2.2(d)]{BFL22} these are calculated by evaluating the ordinary character of the $kN$-Green correspondent given in Table~\ref{tab:tsmodsN} of the trivial source $kG$-module labelling the relevant row, at the representatives of the $\ell'$-conjugacy classes of $N$ using the character table of $N$ given  \smallskip  in~Table~\ref{tab:CTBLN}.
    \item    For  each $2 \leq v \leq i \leq n+1$, by Convention~\ref{conv:tsctbl}, the matrix $T_{i,v}$ consists of the values of the species~$\tau_{Q_{v},s}^{G}$, with $s$ running through  $[\overline{N}_{v}]_{\ell'}$  (identified here with $[N]_{\ell'}$), evaluated at the trivial source modules $[M]\in\TS(G;Q_{i})$. However, by definition of the species, $\tau_{Q_{v},s}^{G}([M])=\chi_{\widehat{M[Q_v]}}(s)$ and  \cite[Lemma~2.8]{BFL22} together with Lemma~\ref{lem:omnibus}(a) show that $M[Q_v]$ is the $kN$-Green correspondent of $M$. Hence $T_{i,v} = T_{i,i}$ for all $2 \leq v < i \leq n+1$. 
\end{enumerate}
\end{proof}


\section{Trivial source character table of $G$ when $\ell \mid q+1$}
\label{sec:tschartablelmidq+1}

\begin{nota}\label{rem:lmidq+1}
	We now adopt notation analogous to Notation~\ref{rem:lmidq-1}, in order to describe $\Triv_{\ell}(G)$ according to Convention~\ref{conv:tsctbl}. Here, we fix $Q_{n+1}:=S'_{\ell}\cong C_{\ell^{n}}$. Then, as before, for each $1\leq i\leq n$ we let $Q_{i}$ denote the unique cyclic subgroup of $Q_{n+1}$ of order $\ell^{i-1}$ and  
	\[\{1\} = Q_1 \leq \dots  \leq Q_{n+1} \in \Syl_{\ell}(G)\]
	is our fixed set of representatives for the conjugacy classes of $\ell$-subgroups of $G$.  We keep the same set of representatives for  the $\ell'$-conjugacy classes of $G$:
	$$[G]_{\ell'}:=\{I_{2}\}\cup\{u\} \cup \{\mathbf{d}(a)\mid a\in\Gamma_{\ell'}\}\cup  \{\mathbf{d}'(\xi)\mid \xi\in\Gamma'_{\ell'}\}\,,$$
	where $\Gamma_{\ell'}$ and $\Gamma'_{\ell'}$ are as defined in Notation~\ref{rem:lmidq-1}. Note that in this case $\Gamma_{\ell'} = \Gamma$ as $\ell \nmid q-1$. We fix the following set of representatives for the $\ell'$-conjugacy classes of $N'$: 
	$$[N']_{\ell'}:=\{I_{2}\}\cup \{ \sigma'\} \cup \{\mathbf{d}'(\xi)\mid \xi\in\Gamma'_{\ell'}\}\,.$$
	By the same arguments as in Notation~\ref{rem:lmidq-1}, for any $2\leq v\leq n+1$ and any $1\leq i\leq n+1$ we can label the columns of $T_{i,v}$ by this fixed set of representatives for  the $\ell'$-conjugacy classes of $N'$. Finally, for each $0\leq  i \leq n$ we fix
	\[
	\pi'_{q,i} :=\frac{(q+1)_\ell \cdot(1- \ell^{-i})}{2}\,, \qquad
	\pi^{''}_{q,i} := \frac{(q+1)_{\ell} - 1}{2} - \pi'_{q, i} = \frac{(q+1)_\ell \cdot\ell^{-i} - 1}{2}\,,
	\]
	and let $\pi'_q := \pi'_{q,n}$. 	Again, these numbers arise naturally in the classification of the trivial source modules in blocks with cyclic defect goups in \cite{HL20}.	
\end{nota}

\vspace{2mm}
\subsection{The $\ell$-blocks and trivial source characters of $G$}
\label{subsec:Glmidq+1}

\begin{lem} {\label{lem:blockslmidq+1}}
	When $\ell \mid q+1$ the $\ell$-blocks of $G$, their defect groups and their Brauer trees with type function are as given in Table~\ref{tab:blockslmidq+1}.
\end{lem}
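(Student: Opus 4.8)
The plan is to proceed exactly as in the proof of Lemma~\ref{lem:blockslmidq-1}: all of the data except the type functions on the Brauer trees is read off from \cite[Section~I]{Burkhardt} together with the character table of $G$ (Table~\ref{tab:CTBLsl2}), and the type functions are then computed directly from~(2) of \S\ref{ssec:cycbl}.

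For the block-theoretic part, observe that $\ell\mid q+1$ forces $\ell\nmid q(q-1)$, so a Sylow $\ell$-subgroup of $G$ is $S_{\ell}'\cong C_{\ell^{n}}$ with $\ell^{n}=(q+1)_{\ell}$. Each of the $\frac{q-2}{2}$ characters $R(\alpha)$ has degree $q+1$, which is divisible by $\ell^{n}$, so these are exactly the blocks of defect zero. The remaining characters $1_{G}$, $\St$ and the $R'(\theta)$ lie in blocks of maximal defect $C_{\ell^{n}}$: examining the central characters modulo $\ell$ (equivalently, grouping the $R'(\theta)$ according to the $\ell'$-part of $\theta$ taken up to inversion) one obtains the principal block, containing $1_{G}$, $\St$ and the characters $R'(\eta)$ with $1\ne\eta\in(S_{\ell}')^{\wedge}$, and, for each nontrivial $\theta\in[(T_{\ell'}')^{\wedge}/\equiv]$, a block $A'_{\theta}$ containing the characters $R'(\theta\eta)$ with $\eta\in(S_{\ell}')^{\wedge}$. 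By Lemmas~\ref{lem:1}(b) and~\ref{lem:omnibus}(b) we have $C_{G}(S_{\ell}')=T'$ and $N_{G}(S_{\ell}')=N'$, and $\sigma'$ inverts $T'$; hence $\bB_{0}(G)$ has inertial index $2$ while each $A'_{\theta}$ has inertial index $1$, so the latter blocks are nilpotent. The shape of each Brauer tree is then forced by the character degrees (or read off directly from \cite{Burkhardt}): each $A'_{\theta}$ has the single edge whose non-exceptional vertex is $R'(\theta)$, while for $\bB_{0}(G)$, since in a path-shaped Brauer tree the middle vertex carries the largest degree, equal to the sum of the degrees of the two leaves, the degrees $1$, $q$, $q-1$ together with $q=1+(q-1)$ force the three-vertex path to have $\St$ as its middle vertex; thus the tree has $1_{G}$ and the exceptional vertex as leaves, the exceptional vertex adjacent to $\St$, and simple modules of dimensions $1$ and $q-1$.

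The actual content of the proof is the computation of the type functions. In each block take the defect group to be $S_{\ell}'$, and let $x=\mathbf{d}'(\zeta)$ be a generator of its unique subgroup $D_{1}$ of order $\ell$, where $\zeta\in\mu_{q+1}$ has order $\ell$. Evaluating the characters at the conjugacy class of $\mathbf{d}'(\zeta)$ via Table~\ref{tab:CTBLsl2} gives $1_{G}(x)=1>0$ and $\St(x)=-1<0$, so $1_{G}$ is positive and $\St$ is negative; and since $\theta$ has $\ell'$-order we have $\theta(\zeta)=1$, so $R'(\theta)(x)=-\theta(\zeta)-\theta(\zeta^{-1})=-2<0$, so each non-exceptional vertex $R'(\theta)$ is negative. (Note that this last sign is opposite to the one appearing at the non-exceptional vertices of the nilpotent blocks in the $\ell\mid q-1$ case, because of the minus sign in the $R'$-row of the character table.) For the exceptional vertices, a short computation with the second orthogonality relations for $(S_{\ell}')^{\wedge}$ gives $\chi_{\Lambda}(x)=1$ for $\bB_{0}(G)$ and $\chi_{\Lambda}(x)=2$ for each $A'_{\theta}$; hence every exceptional vertex is positive. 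This produces precisely the alternating type functions recorded in Table~\ref{tab:blockslmidq+1}.

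I do not anticipate a genuine obstacle: the argument is a close variant of that for Lemma~\ref{lem:blockslmidq-1}. The only two points requiring attention — and the only genuine differences from the $\ell\mid q-1$ situation — are that the exceptional vertex of $\bB_{0}(G)$ now lies at an end of the Brauer tree (adjacent to $\St$) rather than in its middle, and that the non-exceptional vertex $R'(\theta)$ of each nilpotent block $A'_{\theta}$ carries a negative sign rather than a positive one.
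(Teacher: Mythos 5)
Your proposal is correct and takes essentially the same route as the paper: the block distribution, defect groups and tree shapes are taken from Burkhardt together with the character table, and the type functions are computed by evaluating the relevant characters at a generator of the order-$\ell$ subgroup of $S'_{\ell}$, giving exactly the signs ($1_G$ and the exceptional vertices positive, $\St$ and $R'(\theta)$ negative) recorded in Table~\ref{tab:blockslmidq+1}. The only cosmetic slip is the citation --- for $\ell\mid q+1$ the relevant reference is \cite[Section~II]{Burkhardt}, not Section~I --- and your additional verifications (defect-zero criterion for the $R(\alpha)$, inertial indices via $N'/T'\cong C_2$, and the degree relation $q=1+(q-1)$ placing $\St$ at the middle of the principal tree) are correct details that the paper simply delegates to the reference.
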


\begin{longtable}{c||c|c|c}
	\caption{The $\ell$-blocks of $\SL_2(q)$ when $\ell \mid q+1$.}	
	\label{tab:blockslmidq+1}
	\\  
	Block $\bB$
	&	\begin{tabular}{c}
		Number of Blocks \\
		(Type)
	\end{tabular}
	& 	\begin{tabular}{c}
		Defect \\
		Groups
	\end{tabular}
	& 
	\begin{tabular}{c}
		Brauer Tree with Type\\
		Function or  $\Irr(\bB)$
	\end{tabular}
	\\ \hline \hline 
	
	$\mathbf{B}_0(G)$ 
	& 	\begin{tabular}{c}
		1 \\
		\small{(Principal)}
	\end{tabular}
	& 	$C_{\ell^n}$ 
	&	\begin{tabular}{c}
		$$ \xymatrix@R=0.0000pt@C=30pt{	
			{_+} & {_-}& {_+}\\
			{\Circle} \ar@{-}[r]  
			& {\Circle} \ar@{-}[r] 
			&{\CIRCLE} \\
			{^{1_G}}&{^{\St}}&{^{\Xi'}}
		}$$ \\
		{\footnotesize $\Xi':= \sum\limits_{\eta \in [S_{\ell}'^\wedge/ \equiv], \eta \neq 1} R'(\eta)$}\\
	\end{tabular}
	\\ \hline 
	
	\begin{tabular}{c}
		$A'_\theta$  \\
		\small{($\theta \in [T_{\ell'}'^\wedge / \equiv], \theta \neq 1$) }
	\end{tabular}
	& \begin{tabular}{c}
		$\frac{(q+1)_{\ell'} - 1}{2}$ \\
		\small{(Nilpotent)}
	\end{tabular}
	& $C_{\ell^n}$ 		
	& 	\begin{tabular}{c}
		$$  \xymatrix@R=0.0000pt@C=50pt{	
			{_-} & {_+}\\
			{\Circle}  \ar@{-}[r]
			& {\CIRCLE}    \\
			{^{R'(\theta)}}&{^{\Xi'_{\theta}}}
		} $$ \\
		{\footnotesize $\Xi'_{\theta}:= \sum\limits_{\eta \in S_{\ell}'^\wedge \setminus \{1\}} R'(\theta \eta)$}\\
	\end{tabular}
	\\ \hline 
	
	\begin{tabular}{c}
		$A_\alpha$ \\
		\small{($\alpha \in [T_{\ell'}^\wedge / \equiv], \alpha \neq 1$)} 
	\end{tabular}
	& 	\begin{tabular}{c}
		$\frac{(q-1)_{\ell'} - 1}{2} = \frac{q-2}{2}$ \\
		\small{(Defect zero)}
	\end{tabular}
	& $\{1\}$ 
	& $\Irr(A_\alpha) = \{R(\alpha)\}$	
	\\ \hline 	
\end{longtable}

\vspace{10mm}


\begin{proof}
	As in Lemma~\ref{lem:blockslmidq-1}, all of the information in the table comes directly from {\cite[Section~II]{Burkhardt}} and the character table of $G$ (Table~\ref{tab:CTBLsl2}), except for the type functions on the Brauer trees, which we compute according to (2) in \S\ref{ssec:cycbl}.  
	 The trivial character is once again positive so the type function for the principal block is immediate, and for each block $A'_{\theta}$, the character $R'(\theta)$ takes a negative value on all non-trivial $\ell$-elements, and is therefore negative. 
\end{proof}

\begin{lem}
	\label{lem:verticeschainandtsmodslmidq+1}
	When $\ell \mid q+1$  the ordinary characters $\chi_{\widehat{M}}$ of the trivial source $kG$-modules~$M$  are as given in Table~\ref{tab:tsmodslmidq+1}, where for each $1 \leq i < n$, 
	\[ \Xi'_{i}   := \sum\limits_{j=1 }^{\pi'_{q,i}} R'(\eta_j)  
	\]
	is a sum of  $\pi'_{q,i}$ pairwise distinct exceptional characters  in $\Irr(\bB_{0}(G))$ and  for any non-trivial $\theta \in \Irr(T'_{\ell'})$,  
	\[\Xi'_{\theta, i}   := \sum\limits_{j=1}^{2\pi'_{q,i} } R'(\theta\eta_j) \]		
	is a sum of $2\pi'_{q,i}$ pairwise distinct exceptional characters  in $\Irr(A'_{\theta})$.
\end{lem}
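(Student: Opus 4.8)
The plan is to mirror the proof of Lemma~\ref{lem:tsmodslmidq-1}, now working with the block data recorded in Table~\ref{tab:blockslmidq+1} instead of Table~\ref{tab:blockslmidq-1}. First I would dispose of the trivial-vertex case: the ordinary characters of the trivial source $kG$-modules with vertex $\{1\}$ are exactly the characters of the PIMs, which are read off from Table~\ref{tab:blockslmidq+1} (for the defect-zero blocks $A_\alpha$ the PIM has character $R(\alpha)$; for the cyclic blocks $\bB_0(G)$ and $A'_\theta$ one uses the Brauer tree together with, e.g., \cite[Remark~2.6(a)]{BFL22}). This gives the first three rows of Table~\ref{tab:tsmodslmidq+1}.

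Next I would treat the non-trivial vertices $C_{\ell^{i}}$ for $1\leq i\leq n$. Every trivial source $kG$-module with such a vertex lies in a block with non-trivial cyclic defect group, hence in $\bB_0(G)$ or in some $A'_\theta$, since the blocks $A_\alpha$ have defect zero. By \S\ref{ssec:cycbl} each such block contains exactly $e$ trivial source modules with vertex $C_{\ell^{i}}$, where $e$ is the inertial index; here $e=2$ for both $\bB_0(G)$ and $A'_\theta$, matching the two rows listed for each vertex in Table~\ref{tab:tsmodslmidq+1}. To pin down the isomorphism types I would invoke the three parameters of Linckelmann's classification: the Brauer trees with their type functions are given in Table~\ref{tab:blockslmidq+1}, and the endo-permutation source $W(\bB)$ is trivial in all cases by \cite[Proposition~6.5(a)]{HL22} (the relevant Brauer correspondent in $N_G(D_1)=N'$ is nilpotent-by-cyclic of the right shape). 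Then I would apply \cite[Theorem~5.3]{HL20} — specifically the part covering the situation where the exceptional vertex sits at the end of the tree — exactly as in \cite[Lemma~3.3]{BFL22}, to obtain that the ordinary characters are $1_G+\Xi'_i$, $\St+\Xi'_i$ in $\bB_0(G)$ and $R'(\theta)+\Xi'_{\theta,i}$ in $A'_\theta$, where the number of exceptional constituents is controlled by the integers $\pi'_{q,i}$ and $2\pi'_{q,i}$ from Notation~\ref{rem:lmidq+1}.

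The one genuinely new bookkeeping issue, and the step I expect to be the main obstacle, is getting the \emph{count} of exceptional characters right — i.e.\ verifying that the relevant output of \cite[Theorem~5.3]{HL20} produces $\pi'_{q,i}$ (resp.\ $2\pi'_{q,i}$) exceptional constituents rather than $\pi''_{q,i}$, and that this matches the stated $\Xi'_i$, $\Xi'_{\theta,i}$. This is sensitive to the placement of the exceptional vertex on the tree and to the type function, which differ from the $\ell\mid q-1$ case: here the exceptional vertex $\Xi'$ is the \emph{end} vertex of the tree for $\bB_0(G)$ (adjacent to $\St$), not the middle one. I would therefore carefully track through \cite[Theorem~5.3]{HL20} which walk around the tree produces the trivial source module with a given vertex, note that the number of exceptional characters appearing is $\frac{(q+1)_\ell\cdot\ell^{-i}-1}{2}$ times the appropriate multiplicity — whence the appearance of $\pi''_{q,i}$ in Notation~\ref{rem:lmidq+1} as a sanity check, with $\pi'_{q,i}+\pi''_{q,i}=\frac{(q+1)_\ell-1}{2}=|\Lambda|$ — and confirm consistency with the degrees (e.g.\ $\dim$ of the lift of the vertex-$C_{\ell^i}$ module in $\bB_0(G)$ must be $1+(q+1)\pi'_{q,i}$ or $q+(q+1)\pi'_{q,i}$, which one checks against the class-size-weighted character values). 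Once the count is confirmed, the identification of which character goes with which module (and hence the exact shape of Table~\ref{tab:tsmodslmidq+1}) follows by the same trivial-constituent argument used in Lemma~\ref{lem:tsmodslmidq-1}: a trivial source $kG$-module whose ordinary character contains $1_G$ must be the one built from the $1_G$-end of the tree, and the $A'_\theta$ case has a unique trivial source module per vertex so there is no ambiguity.
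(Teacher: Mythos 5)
Your overall strategy is the same as the paper's: the trivial-vertex case via the PIMs, then \cite[Theorem~5.3]{HL20} applied with the three source-algebra parameters (Brauer trees with type function from Table~\ref{tab:blockslmidq+1}, and $W(\bB)$ trivial by \cite[Proposition~6.5(a)]{HL22}), exactly parallel to Lemma~\ref{lem:tsmodslmidq-1}. The paper's proof is essentially just this one sentence of deferral. However, your proposal contains a genuine error at precisely the step you yourself identify as the crux. You assert that the classification yields the characters $1_G+\Xi'_i$, $\St+\Xi'_i$ in $\bB_0(G)$ and $R'(\theta)+\Xi'_{\theta,i}$ in $A'_\theta$; but the lemma (Table~\ref{tab:tsmodslmidq+1}) states they are $1_G+\St+\Xi'_i$, $\St+\Xi'_i$ and $\Xi'_{\theta,i}$. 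This is not a typo on the paper's side: because the exceptional vertex of $\sigma(\bB_0(G))$ now sits at the end of the line, adjacent to $\St$, the two trivial source modules with vertex $C_{\ell^i}$ correspond to walks that pick up $1_G+\St$ (resp.\ $\St$) plus $\pi'_{q,i}$ exceptional constituents; and in $A'_\theta$ the type function is reversed relative to $A_\alpha$ ($R'(\theta)$ is the \emph{negative} vertex), so the unique trivial source module with vertex $C_{\ell^i}$ has \emph{no} non-exceptional constituent at all. You carried over the $\ell\mid q-1$ pattern verbatim instead.

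Two further symptoms of the same confusion: your dimension sanity check uses $1+(q+1)\pi'_{q,i}$, but the exceptional characters here are the $R'(\eta)$ of degree $q-1$, so the correct degrees are $1+q+(q-1)\pi'_{q,i}$ and $q+(q-1)\pi'_{q,i}$ (compare Table~\ref{tab:lmidq+1T_i1}); and you later state that the number of exceptional constituents is $\frac{(q+1)_\ell\cdot\ell^{-i}-1}{2}=\pi''_{q,i}$, contradicting your earlier (correct) claim that it is $\pi'_{q,i}$ — the $\pi''_{q,i}$ only enter for the Green correspondents in $N'$ (Lemma~\ref{lem:tsmodsN'}). So the framework of your argument is right, but as written it would prove a different, incorrect table; you need to actually run \cite[Theorem~5.3]{HL20} on the tree $1_G - \St - \Xi'$ with its type function rather than transposing the output of the $\ell\mid q-1$ computation.
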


\newpage

\begin{longtable}{|c|c|c|}
	\caption{Trivial source characters of $\SL_2(q)$ when $\ell \mid q+1$.}	
	\label{tab:tsmodslmidq+1}
	\\  \hline 
	\begin{tabular}{c}
		Vertices of $M$
	\end{tabular}
	& \begin{tabular}{c}
		Character $\chi_{\widehat{M}}$
	\end{tabular}
	& \begin{tabular}{c}
		Block containing $M$
	\end{tabular}
	\\ \hline \hline 
	
	\multirow{3}{*}{$\{1\}$}
	&	$1_G + \St$, $\St + ~ \Xi'$ 
	&
	$\mathbf{B}_0(G)$
	\\
	
	&$R'(\theta) + \Xi'_{\theta}$
	&$A'_{\theta}$ ($\theta \in [T_{\ell'}'^\wedge / \equiv], \theta \neq 1$)
	\\

	&$R(\alpha)$
	&$A_{\alpha}$ ($\alpha \in [T_{\ell'}^\wedge / \equiv], \alpha \neq 1$)
	\\
	
	\hline \hline 
	
	\multirow{2}{*}{
		\begin{tabular}{c}
			$C_{\ell^i}$ \\
			($1 \leq i < n$)
	\end{tabular}}
	&$1_G + \St +  ~ \Xi'_i$, 	
	$\St + ~ \Xi'_i$			
	&$\mathbf{B}_0(G)$
	\\

	&$\Xi'_{\theta, i}$ 
	&$A'_{\theta}$ ($\theta \in [T_{\ell'}'^\wedge / \equiv], \theta \neq 1$) 
	\\
	\hline \hline
	
	\multirow{2}{*}{
		$C_{\ell^n}$}
	&$1_G$, $\Xi'$
	&$\mathbf{B}_0(G)$
	\\
		
	&$\Xi'_{\theta}$
	&$A'_{\theta}$ ($\theta \in [T_{\ell'}'^\wedge / \equiv], \theta \neq 1$)
	\\
	\hline 	
\end{longtable}

\begin{proof}
The arguments are analogous to those given in the proof of Lemma~\ref{lem:tsmodslmidq-1}.
In this case, the parameters (1), (2) and (3) of~\S\ref{ssec:cycbl} necessary to apply  the classification of the trivial source modules given in~\cite[Theorem~5.3]{HL20} are: the Brauer trees with their type function given in~Table~\ref{tab:blockslmidq+1}, and the module $W(\bB)$, which is also always trivial in this case by~\cite[Proposition~6.5(a)]{HL22}. The characters $\chi^{}_{\widehat{M}}$ are then obtained exactly as in the proof of~\cite[Lemma~4.3]{BFL22}. 
\end{proof}

\subsection{The $\ell$-blocks and trivial source characters of $N'$}
\label{subsec:N'}

\begin{lem}
	\label{lem:blocksN'}
	When $\ell \mid q+1$ the blocks of $N'$, their defect groups and their Brauer trees with type function are as given in Table \ref{tab:blocksN'}. 
\end{lem}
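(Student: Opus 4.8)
The plan is to mimic the proof of Lemma~\ref{lem:blocksN} almost verbatim, replacing $N,T,S_{\ell},\sigma,\varepsilon,\chi_{\alpha}$ throughout by $N',T',S_{\ell}',\sigma',\varepsilon',\chi'_{\theta}$, and using the character table of $N'$ from Table~\ref{tab:CTBLN'} together with the hypothesis $\ell\mid q+1$, so that $q+1\equiv 0\pmod{\mathfrak{p}}$. First I would pin down the defect groups: by Lemma~\ref{lem:omnibus}(b) the Sylow $\ell$-subgroup $S_{\ell}'$ is normal in $N'$, hence $S_{\ell}'=O_{\ell}(N')$; since $O_{\ell}(N')$ is contained in every defect group while every defect group is contained in a Sylow $\ell$-subgroup, each $\ell$-block of $N'$ has defect group exactly $S_{\ell}'\cong C_{\ell^{n}}$.

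Next I would determine the partition of $\Irr(N')=\{1_{N'},\varepsilon'\}\sqcup\{\chi'_{\theta}\mid\theta\in[T'^\wedge / \equiv],\ \theta\neq 1\}$ into $\ell$-blocks by examining the central characters modulo $\mathfrak{p}$, exactly as for $N$. From Table~\ref{tab:CTBLN'}, on the class of $\sigma'$ the relevant central-character values are $\pm(q+1)\equiv 0$ for $1_{N'}$ and $\varepsilon'$ and $0$ for each $\chi'_{\theta}$, while on the class of $\mathbf{d'}(\xi)$ they are $1$ for $1_{N'}$ and $\varepsilon'$ and $\theta(\xi)+\theta(\xi^{-1})$ for $\chi'_{\theta}$. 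Reducing modulo $\mathfrak{p}$ and using the second orthogonality relations in $S_{\ell}'$ (so that $\sum_{\eta\in S_{\ell}'^\wedge}\eta$ vanishes at any element of non-trivial $\ell$-part), one reads off that $\bB_{0}(N')$ contains $1_{N'}$, $\varepsilon'$ and all $\chi'_{\eta}$ with $\eta\in\Irr(S_{\ell}')\setminus\{1\}$, and that for each non-trivial $\theta\in[T_{\ell'}'^\wedge / \equiv]$ there is a (nilpotent) block $\bb'_{\theta}$ with $\Irr(\bb'_{\theta})=\{\chi'_{\theta\eta}\mid\eta\in\Irr(S_{\ell}')\}$; as a sanity check, $\bB_{0}(N')$ then has $2+\frac{\ell^{n}-1}{2}$ ordinary characters and each $\bb'_{\theta}$ has $\ell^{n}$, matching a cyclic block of defect $\ell^{n}$ with inertial index $2$, respectively $1$.

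Finally I would read off the Brauer trees and type functions. Since the defect group is normal in each block, the Brauer tree is a star with the exceptional vertex at its centre (\cite[Proposition~6.5.4]{BensonBookI}), and the non-exceptional vertices are forced to be the $\ell$-rational characters; so $\sigma(\bB_{0}(N'))$ is the line with leaves $1_{N'}$, $\varepsilon'$ and centre $\sum_{\eta\in[S_{\ell}'^\wedge/\equiv],\ \eta\neq 1}\chi'_{\eta}$, and $\sigma(\bb'_{\theta})$ is the single edge joining $\chi'_{\theta}$ to $\sum_{\eta\in S_{\ell}'^\wedge\setminus\{1\}}\chi'_{\theta\eta}$. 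For the type function of \S\ref{ssec:cycbl}(2), note that by Lemma~\ref{lem:omnibus}(b) the order-$\ell$ subgroup $D_{1}\leq S_{\ell}'$ is normal in $N'$; evaluating at a generator $x$ of $D_{1}$, the characters $1_{N'}$, $\varepsilon'$, $\chi'_{\theta}$ are trivial on $\ell$-elements (as $\ell\neq 2$) and take the positive values $1$, $1$, $2$, whereas the exceptional sums take negative values, namely $\sum_{\eta\in S_{\ell}'^\wedge\setminus\{1\}}\eta(x)=-1$ for $\bB_{0}(N')$ and $-2$ for $\bb'_{\theta}$, so the exceptional vertices are negative; this produces the alternating signs in Table~\ref{tab:blocksN'}. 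I do not anticipate a genuine obstacle: the only point demanding attention is the reduction modulo $\mathfrak{p}$ of the values $\theta(\xi)+\theta(\xi^{-1})$ and the verification that they distinguish the $\chi'_{\theta}$ exactly according to the $\ell'$-part of $\theta$ up to inversion, which goes through verbatim as in the proof of Lemma~\ref{lem:blocksN}.
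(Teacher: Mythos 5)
Your proposal is correct and follows essentially the same route as the paper: distribute $\Irr(N')$ into blocks via central characters modulo $\mathfrak{p}$, observe that the normal defect group forces star-shaped Brauer trees with central exceptional vertex so that the $\ell$-rational characters $1_{N'}$, $\varepsilon'$, $\chi'_{\theta}$ must be the non-exceptional vertices, and read off the type function by evaluating at a generator of $D_{1}$ (which is normal in $N'$ by Lemma~\ref{lem:omnibus}(b)); you merely make explicit the sign computations and character counts that the paper leaves as ``analogous to Lemma~\ref{lem:blocksN}''. The only (immaterial) slip is that the central-character value of $1_{N'}$ and $\varepsilon'$ on the class of $\mathbf{d}'(\xi)$ is $2$, not $1$, since the class has size $2$.
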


\enlargethispage{10mm}

\begin{proof}
	The distribution of the characters of $N'$ into blocks can be determined by examining the values of the central characters of $N'$  modulo $\ell$: 
	\begin{itemize}
		\item the principal block $\bB_0(N')$ contains $1_{N'}$, $\varepsilon'$ and $\chi'_{\eta}$ for each $\eta \in S_{\ell}^{'\wedge} \setminus \{1\}$; and 
		\item for each non-trivial $\theta \in \Irr(T_{\ell'})$, there is a block $\bb'_\theta$ containing $\chi'_{\theta \eta}$ for all $\eta \in \Irr(S_{\ell}^{'})$.		
	\end{itemize} 
The Brauer trees and their type functions are determined using arguments analogous to those in Lemma \ref{lem:blocksN} where in this case we note that $1_{N'}$, $\varepsilon'$ and $\chi'_{\theta}$ ($\theta \in \Irr(T_{\ell'}')$) are $\ell$-rational characters and therefore cannot be exceptional.  	
\end{proof}
\begin{longtable}{c||c|c|c}
	\caption{The $\ell$-blocks of $N'$ when $\ell \mid q+1$ and $q$ is even.}	
	\label{tab:blocksN'}
	\\  
	\begin{tabular}{c} 
		Block \\
		$\bb$
	\end{tabular}
	&	
	\begin{tabular}{c}
		Number of Blocks \\
		(Type)
	\end{tabular}
	& 	\begin{tabular}{c}
		Defect \\
		Groups
	\end{tabular}
	& 
	\begin{tabular}{c}
		Brauer Tree $\sigma(\bb)$ \\
		with Type Function
	\end{tabular}
	\\ \hline \hline 			
	
	$\bB_0(N')$ 
	& \begin{tabular}{c}
		$1$ \\
		(Principal)
	\end{tabular} 
	& 	$C_{\ell^n}$ 
	&	\begin{tabular}{c}
		$$ \xymatrix@R=0.0000pt@C=30pt{
			{_+} & {_-}& {_+}\\
			{\Circle} \ar@{-}[r]
			& {\CIRCLE} \ar@{-}[r] 
			&{\Circle} \\
			{^{1_{N'}}}&{^{\Xi^{N'}}}&{^{\varepsilon'}}
		}$$\\
		{\footnotesize $\Xi^{N'}:= \sum\limits_{\eta \in [S_{\ell}'^\wedge / \equiv], \eta \neq 1} \chi'_{\eta}$}\\
	\end{tabular}
	\\ \hline

	\begin{tabular}{c}
		$\bb'_{\theta}$ \\
		($\theta \in [{T'_{\ell'}}^\wedge / \equiv], \theta \neq 1$)
	\end{tabular}
	&
	\begin{tabular}{c}
	$\frac{(q+1)_{\ell'} -1}{2}$ \\
	(Nilpotent)
	\end{tabular}
	& 	$C_{\ell^n}$ 		
	& 	\begin{tabular}{c}
		$  \xymatrix@R=0.0000pt@C=30pt{	
			{_+} & {_-}\\
			{\Circle} \ar@{-}[r] 
			& {\CIRCLE}    \\
			{^{\chi'_\theta}}&{^{\Xi^{N'}_{\theta}}}
		}$\\
		{\footnotesize $\Xi^{N'}_{\theta}:= \sum\limits_{\eta \in S_{\ell}'^\wedge \setminus \{1\}} \chi'_{\theta \eta}$}\\
	\end{tabular}
	\\ \hline 
	
\end{longtable}

\vspace{2mm}

\enlargethispage{2cm}

\begin{lem}
	\label{lem:tsmodsN'}
	When $\ell \mid q+1$ the ordinary characters $\chi_{\widehat{f(M)}}$ of the $kN'$-Green correspondents $f(M)$ of the trivial source $kG$-modules $M$ with a non-trivial vertex are as given in Table~\ref{tab:tsmodsN'}, where, for each $1 \leq i < n$, 
	\[ \Xi^{N'}_i   := \sum\limits_{j=1}^{\pi^{''}_{q,i}} \chi'_{\eta}
	\]
	is a sum of $\pi^{''}_{q,i}$ pairwise distinct exceptional characters in $\Irr(\bB_0(N'))$, and, for any non-trivial $\theta \in \Irr(T'_{\ell'})$, 
	\[\Xi^{N'}_{\theta, i}   := \sum\limits_{j=1}^{2\pi^{''}_{q,i}} \chi'_{\theta\eta} \]
	is a sum of $2 \pi^{''}_{q,i}$ pairwise distinct exceptional characters of $\Irr(\bb'_\theta)$. 
\end{lem}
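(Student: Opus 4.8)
The plan is to follow the proof of Lemma~\ref{lem:tsmodsN} essentially line by line, with $N$, $T$, $S_\ell$, $\Xi$ and the exceptional multiplicities $\pi_{q,i}$ replaced throughout by $N'$, $T'$, $S'_\ell$, $\Xi'$ and $\pi''_{q,i}$; I shall only indicate the changes.

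First I would determine the Brauer correspondents in $N'$ of the $\ell$-blocks of $G$ listed in Lemma~\ref{lem:blockslmidq+1}. Since $N'=N_G(Q)$ for every non-trivial $\ell$-subgroup $Q$ of $G$ by Lemma~\ref{lem:omnibus}(b), Brauer's third main theorem shows that the Brauer correspondent of $\mathbf{B}_0(G)$ in $N'$ is $\mathbf{B}_0(N')$. For a fixed non-trivial $\theta\in[{T'_{\ell'}}^\wedge/\equiv]$ I would prove that $\mathbf{b}'_\theta$ is the Brauer correspondent of $A'_\theta$: writing $i_\theta\in\cO G$ and $i^{N'}_\theta\in\cO N'$ for the central primitive block idempotents of $A'_\theta$ and $\mathbf{b}'_\theta$ respectively, I would compute both explicitly — summing $R'(\theta\eta)$, resp. $\chi'_{\theta\eta}$, over $\eta\in{S'_\ell}^\wedge$ — from the character tables of $G$ and $N'$ (Tables~\ref{tab:CTBLsl2} and~\ref{tab:CTBLN'}), using $R'(\theta\eta)(1)=q-1$, $\chi'_{\theta\eta}(1)=2$ and $\chi'_{\theta\eta}(\sigma')=0$, and then apply the Brauer homomorphism $\Br_{S'_\ell}$ to the reduction $\bar i_\theta\in kG$. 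Exactly as in Lemma~\ref{lem:tsmodsN}, only the terms supported on $C_G(S'_\ell)=T'$ (Lemma~\ref{lem:1}(b)) survive, and the second orthogonality relations in $S'_\ell$ annihilate those indexed by elements of non-trivial $\ell$-part. The point particular to this case is that $\ell\mid q+1$, so $q\equiv-1\pmod{\mathfrak{p}}$, and this sign precisely compensates the sign in the Deligne--Lusztig character value $R'(\theta\eta)(\mathbf{d}'(\xi))=-(\theta\eta)(\xi)-(\theta\eta)(\xi^{-1})$; one concludes $\bar i^{N'}_\theta\equiv\Br_{S'_\ell}(\bar i_\theta)\pmod{\mathfrak{p}}$, and hence that $\mathbf{b}'_\theta$ is the Brauer correspondent of $A'_\theta$.

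Next, since the Brauer correspondence and the Green correspondence commute, the $kN'$-Green correspondent $f(M)$ of a trivial source $kG$-module $M$ with non-trivial vertex lies in $\mathbf{B}_0(N')$ if $M$ lies in $\mathbf{B}_0(G)$, and in $\mathbf{b}'_\theta$ if $M$ lies in $A'_\theta$; moreover $W(\mathbf{b})=W(\mathbf{B})$ is trivial in all cases by \cite[Proposition~6.5(a)]{HL22}. Thus all three parameters of~\S\ref{ssec:cycbl} are available for each block $\mathbf{b}$ of $N'$ — its Brauer tree with type function (Table~\ref{tab:blocksN'}) and the trivial module $W(\mathbf{b})$ — and feeding them into the classification of trivial source modules in cyclic blocks \cite[Theorem~5.3]{HL20}, precisely as was done for $G$ in Lemma~\ref{lem:verticeschainandtsmodslmidq+1}, yields the list of characters $\chi_{\widehat{f(M)}}$ of Table~\ref{tab:tsmodsN'}, up to reordering. (The exceptional multiplicities that appear are $\pi''_{q,i}$ and $2\pi''_{q,i}$ rather than $\pi'_{q,i}$ and $2\pi'_{q,i}$ because $\mathbf{B}_0(N')$ has its exceptional vertex in the interior of its star-shaped Brauer tree while $\mathbf{B}_0(G)$ has it at a leaf, and $\pi'_{q,i}+\pi''_{q,i}=\frac{1}{2}\bigl((q+1)_\ell-1\bigr)$ is the total number of exceptional characters.)

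It then remains to pair the rows of Tables~\ref{tab:tsmodslmidq+1} and~\ref{tab:tsmodsN'} correctly. For the nilpotent blocks $A'_\theta$ the inertial index is $1$, so there is a unique trivial source module of each vertex on either side and the pairing is forced. For $\mathbf{B}_0(G)$, which has inertial index $2$, I would argue as in Lemma~\ref{lem:tsmodsN}: if the ordinary character of a trivial source $kG$-module contains $1_G$, then the ordinary character of its $kN'$-Green correspondent contains $1_{N'}$; since for each non-trivial vertex exactly one of the two trivial source modules of $\mathbf{B}_0(N')$ with that vertex has $1_{N'}$ as a constituent, this determines the pairing, the other row following by exclusion. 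The hard part is the first step: carrying out the idempotent computation and checking that the $q\equiv-1$ sign cancels the intrinsic sign of Deligne--Lusztig induction from the non-split torus, so that the Brauer correspondent of $A'_\theta$ is exactly $\mathbf{b}'_\theta$, and not $\mathbf{B}_0(N')$ or some $\mathbf{b}'_{\theta'}$ with $\theta'\notin\{\theta,\theta^{-1}\}$.
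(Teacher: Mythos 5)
Your proposal is correct and follows essentially the same route as the paper: the explicit comparison of the block idempotents $i'_\theta$ and $i^{N'}_\theta$ restricted to $T'$, with the congruence $q\equiv-1\pmod{\mathfrak p}$ cancelling the sign in $R'(\theta\eta)(\mathbf d'(\xi))=-\theta\eta(\xi)-\theta\eta(\xi^{-1})$, is exactly the computation the paper carries out, and the remaining steps (commutation of Brauer and Green correspondence, triviality of $W(\bb)$, application of \cite[Theorem~5.3]{HL20}, and the forced pairing via uniqueness in the nilpotent blocks and the trivial-character constituent in the principal block) coincide with the paper's argument. Your extra remarks — Brauer's third main theorem for $\bB_0(G)$ and the leaf-versus-interior position of the exceptional vertex explaining $\pi''_{q,i}$ versus $\pi'_{q,i}$ — are slightly more explicit than the paper but not a different method.
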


\begin{longtable}{|c|l|l|}
	\caption{The trivial source characters of the $kN'$-Green correspondents when $\ell \mid q+1$.}	
	\label{tab:tsmodsN'}
	\\  \hline 
	\begin{tabular}{c}
		Vertices of $M$
	\end{tabular}
	& \begin{tabular}{c}
		Character $\chi_{\widehat{M}}$
	\end{tabular}
	& \begin{tabular}{c}
		Character $\chi_{\widehat{f(M)}}$ of the \\
		Green correspondent 
	\end{tabular}
	\\ \hline \hline

	
	\begin{tabular}{c}
		$C_{\ell^i}$ \\
		($1 \leq i < n$)
	\end{tabular} 
	& 	\begin{tabular}{l}
		$1_G + \St +\, \Xi'_i$ \\ 
		$\St +\, \Xi'_i$ \\ 
		$\Xi'_{\theta, i}$ ($\theta \in [{T'_{\ell'}}^\wedge / \equiv], \theta \neq 1$) 
	\end{tabular}
	& \begin{tabular}{l}
		$1_{N'} + \Xi^{N'}_i$\\ 
		$\varepsilon' + \Xi^{N'}_i$\\ 
		$\chi'_\theta + \Xi^{N'}_{\theta, i}$ 
	\end{tabular}
	
	\\ 	\hline \hline 
	
	$C_{\ell^n}$
	& 
	\begin{tabular}{l}
		$1_G$ \\
		$\Xi'$ \\
		$\Xi'_{\theta}$ ($\theta \in [T_{\ell'}'^\wedge / \equiv], \theta \neq 1$)
	\end{tabular}
	&
	\begin{tabular}{l}
		$1_{N'}$ \\
		$\varepsilon'$ \\
		$\chi'_\theta$
	\end{tabular}
	\\  \hline 	
	
\end{longtable}

\begin{proof}
This proof is analogous to the proof of Lemma~\ref{lem:tsmodsN}. We first determine the Brauer correspondents in $N'$ of the nilpotent blocks of $G$. 
Fix a non-trivial $\theta \in \Irr(T_{\ell'}')$. 
Let $i'_{\theta}$ denote the central primitive idempotent of $\mathcal{O}G$ such that $\mathcal{O}Gi'_\theta$ is the block of $\mathcal{O}G$ corresponding to the $\ell$-block $A_\theta$, and let $i_\theta^{N'}$ denote the central primitive idempotent of $\mathcal{O}N'$ such that $\mathcal{O}N'i^{N'}_\theta$ is the block of $\mathcal{O}N'$ corresponding to the $\ell$-block $\bb'_\theta$. As in  Lemma~\ref{lem:tsmodsN}, we need only compare the coefficients of elements $\mathbf{d}'(\xi^{-1}) \in T'$ in $i'_\theta$ and $i_\theta^{N'}$.
For any $\xi \in \mu_{q+1}$ with non-trivial $\ell$-part, the coefficient of $\mathbf{d}'(\xi^{-1}) \in T'$ is $0$ in both $i'_\theta$ and $i_\theta^{N'}$. If $\xi$ is an $\ell'$-element, then the coefficient of  $\mathbf{d}'(\xi^{-1}) $ in $i'_\theta$ is
\[ 
\left\{ 
\begin{array}{ll}
 \frac{(q-1)^2|S_{\ell}'|}{|G|}
 = \frac{q-1}{q|T'_{\ell'}|}
 & \mbox{for } \xi = 1,\\
 \frac{(q-1)|S_{\ell}'|}{|G|}\left(-\theta(\xi) - \theta (\xi^{-1}) \right)  
 =  \frac{1}{q|T'_{\ell'}|}\left(-\theta(\xi) - \theta (\xi^{-1}) \right)  
 & \mbox{for } \xi \neq 1, \xi \in (\mu_{q+1})_{\ell'},
 \end{array}
 \right.
 \]
and the coefficient in $i_\theta^{N'}$ is
\[ 
\left\{ 
\begin{array}{ll}
 \frac{4|S_{\ell}'|}{|N'|}
 = \frac{2}{|T'_{\ell'}|}
 & \mbox{for } \xi = 1,\\
 \frac{2|S_{\ell}'|}{|N'|} \left(\theta(\xi) + \theta (\xi^{-1}) \right)  
 =  \frac{1}{|T'_{\ell'}|}\left(\theta(\xi) + \theta (\xi^{-1}) \right)  
 & \mbox{for } \xi \neq 1, \xi \in (\mu_{q+1})_{\ell'}.
 \end{array}
 \right.
 \]
Since $\ell \mid q+1$ we have $\frac{q-1}{q} \equiv 2 \mod \mathfrak{p}$ and $\frac{1}{q} \equiv -1 \mod \mathfrak{p}$, and therefore $\Br_{S_{\ell}'}(\overline{i'_\theta}) \equiv \overline i_\theta^{N'} \mod \mathfrak{p}$ so $\bb'_\theta$ is the Brauer correspondent of $A'_\theta$  in $N'$. 

As mentioned in Lemma~\ref{lem:tsmodsN}, the Green correspondent of a trivial source $kG$-module in a block~$\bB$ of $G$ lies in the Brauer correspondent block $\bb$ of $N'$, and since $W(\bB)=W(\bb)$ is trivial in all cases, the characters of the trivial source $\bb$-modules can be determined using \cite[Theorem~5.3]{HL20}. Moreover, having determined the Brauer correspondent blocks, in each case there is only one possible choice for the Green correspondent of a trivial source module of $kG$ with a fixed vertex. The characters of these Green correspondent modules are as in Table\ ~\ref{tab:tsmodsN'}. 	
\end{proof}

\subsection{The trivial source character table of $G$}

\vspace{2mm}

\begin{thm}\label{thm:l|q+1}
Let $G= \SL_2(q)$ with $q = 2^f$ for some integer $f \geq 2$, and suppose that ${\ell \mid q+1}$. 
Then,  with notation as in Notation~\ref{rem:lmidq+1}, the trivial source character table $\Triv_{\ell}(G)= [T_{i,v}]_{1\leq i,v\leq n+1}$ is given as follows:
       \begin{enumerate}[{\,\,\rm(a)}] \setlength{\itemsep}{2pt}
		\item $T_{i,v} = \mathbf{0}$ if $v > i$;
		\item the matrices $T_{i,1}$ are as given in Table \ref{tab:lmidq+1T_i1} for each $1 \leq i \leq n+1$; 
		\item the matrices $T_{i,i}$ are as given in Table \ref{tab:lmidq+1T_ii} for each $2 \leq i \leq n$;  
		\item the matrix $T_{n+1,n+1}$ is as given in Table \ref{tab:lmidq+1T_n+1n+1}; and 
		\item $T_{i,v} = T_{i,i}$ for all $2 \leq v \leq i \leq n+1$.
	\end{enumerate} 	
\end{thm}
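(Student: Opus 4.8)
The plan is to follow exactly the blueprint established in Theorem~\ref{thm:l|q-1} for the case $\ell\mid q-1$, transporting each step to the $\ell\mid q+1$ situation via the data assembled in Lemmas~\ref{lem:blockslmidq+1}, \ref{lem:verticeschainandtsmodslmidq+1}, \ref{lem:blocksN'} and~\ref{lem:tsmodsN'}. The rows of $\Triv_\ell(G)$ are labelled by the ordinary characters $\chi_{\widehat M}$ of the trivial source $kG$-modules, which are precisely those listed in Table~\ref{tab:tsmodslmidq+1}.

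For part~(a), vanishing of $T_{i,v}$ for $v>i$ is again a direct consequence of \cite[Remark~2.5(c)]{BFL22}: if $Q_v$ is not contained (up to conjugacy) in a vertex of $M$, then the Brauer quotient $M[Q_v]$ is zero, hence so is every species value $\tau^G_{Q_v,s}([M])$. For part~(b), the entries of $T_{i,1}$ are obtained, by \cite[Remark~2.5(d)]{BFL22}, simply by evaluating each character $\chi_{\widehat M}$ from Table~\ref{tab:tsmodslmidq+1} on the representatives of $[G]_{\ell'}$ using the character table of $G$ in Table~\ref{tab:CTBLsl2}; here one uses that for $\eta\in S'^\wedge_\ell$ the value of $R'(\eta)$ on $\ell'$-elements $\mathbf{d}(a)$ is $0$ and on $\mathbf{d}'(\xi)$ with $\xi\in\Gamma'_{\ell'}$ is $-\eta(\xi)-\eta(\xi^{-1})$ which, summed with the second orthogonality relations, collapses to the stated closed forms involving $\pi'_{q,i}$. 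For parts~(c) and~(d), by Convention~\ref{conv:tsctbl} the matrix $T_{i,v}$ for $2\le v\le i$ records the values $\tau^G_{Q_v,s}([M])=\varphi_{M[Q_v]}(s)$ for $s\in[\overline N_v]_{\ell'}$; since $N_G(Q_v)=N'$ for all $2\le v\le n+1$ by Lemma~\ref{lem:omnibus}(b), \cite[Lemma~2.8]{BFL22} identifies $M[Q_v]$ with the $kN'$-Green correspondent $f(M)$, whose ordinary character is listed in Table~\ref{tab:tsmodsN'}. Evaluating those characters on $[N']_{\ell'}$ via the character table of $N'$ in Table~\ref{tab:CTBLN'} yields the entries of Tables~\ref{tab:lmidq+1T_ii} and~\ref{tab:lmidq+1T_n+1n+1}, and simultaneously shows $T_{i,v}=T_{i,i}$ for all $2\le v\le i\le n+1$, which is part~(e).

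The one genuine subtlety, and the reason why the statement separates $T_{i,i}$ for $2\le i\le n$ from $T_{n+1,n+1}$, is the behaviour at the full Sylow subgroup $Q_{n+1}=S'_\ell$: here $\overline N_{n+1}=N'/S'_\ell$, the Brauer quotient $M[S'_\ell]$ is a $k(N'/S'_\ell)$-module of defect zero, and the exceptional-character sums $\Xi'$, $\Xi'_\theta$ that occur as $\chi_{\widehat M}$ for the vertex-$S'_\ell$ modules (Table~\ref{tab:tsmodslmidq+1}) have Green correspondents $\varepsilon'$, $\chi'_\theta$ rather than a trivial-type summand; this is already recorded in the last block of Table~\ref{tab:tsmodsN'}, so the computation of $T_{n+1,n+1}$ is still just an evaluation, but one must be careful that the row labels ($1_G$, $\Xi'$, $\Xi'_\theta$) are matched to the correct Green correspondents. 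I expect this bookkeeping — keeping the pairing between $G$-side characters and $N'$-side Green correspondents straight across the three families of blocks $\bB_0(G)$, $A'_\theta$, $A_\alpha$ — to be the main point requiring care; everything else is a routine transcription of the $\ell\mid q-1$ argument with $q-1$ replaced by $q+1$, $T$ by $T'$, $N$ by $N'$, and $\pi_{q,i}$ by $\pi'_{q,i}$, $\pi''_{q,i}$.
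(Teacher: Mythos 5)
Your proposal is correct and follows essentially the same route as the paper: the paper's proof simply declares the calculations ``completely analogous'' to those of Theorem~\ref{thm:l|q-1}, substituting Tables~\ref{tab:tsmodslmidq+1}, \ref{tab:tsmodsN'} and~\ref{tab:CTBLN'}, which is exactly the transcription you carry out (parts (a), (b) via \cite[Remarks~2.5(c),(d)]{BFL22}, parts (c)--(e) via Convention~\ref{conv:tsctbl}, \cite[Lemma~2.8]{BFL22} and Lemma~\ref{lem:omnibus}(b)). Your extra remark on matching the vertex-$S'_\ell$ rows $\Xi'$, $\Xi'_\theta$ to their Green correspondents $\varepsilon'$, $\chi'_\theta$ is exactly the bookkeeping already settled in Lemma~\ref{lem:tsmodsN'}, so nothing further is needed.
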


\begin{proof}
	The calculations are completely analogous to those in the proof of Theorem~\ref{thm:l|q-1} except that we take the trivial source $kG$-modules from Table~\ref{tab:tsmodslmidq+1}, their  Green correspondents from Table~\ref{tab:tsmodsN'} and we use the character table of $N'$ from~Table~\ref{tab:CTBLN'}.	
\end{proof}

\begin{landscape}
\thispagestyle{plain}
	
	\renewcommand*{\arraystretch}{1.5}
	\begin{longtable}{c|c||c|c|c|c|}
		\caption{$T_{i,1}$ for $1 \leq i \leq n+1$.}	
		\label{tab:lmidq+1T_i1}
		\\  \cline{2-6}
		
		&&
\begin{tabular}{c}
	$I_2$
\end{tabular}
& 
\begin{tabular}{c}
	$\mathbf{d}(a)$ \vspace{-.8ex}  \\
	({\footnotesize $ a \in \Gamma_{\ell'}$})
\end{tabular}
&
\begin{tabular}{c}
	$\mathbf{d}'(\xi)$ \vspace{-.8ex}  \\
	({\footnotesize$\xi \in \Gamma'_{\ell'}$})
\end{tabular}
&
\begin{tabular}{c}
	$u$
\end{tabular}
\\ \hline \hline

		\multirow{4}{*}[0ex]{
			$T_{1,1} $}
		& $1_G + \St$ 
		& $1 + q$ 
		& $2$ 
		& $0$ 
		& $1$ 
		\\ \cline{2-6}

		& $\St + \, \Xi'$ 
		& $ q + (q-1)\pi'_q$ 
		& $1$ 
		& $-1 - 2\pi'_q$ 
		& $-\pi'_q $
		\\ \cline{2-6}

		&\begin{tabular}{c}
			$R'(\theta) + \Xi'_{\theta}$ \, \small{$\left(\theta \in [T_{\ell'}'^\wedge / \equiv], \theta^2 \neq 1\right)$}
			
		\end{tabular}
		& $(q-1)(1 + 2\pi'_q)$
		& $0$
		& $-(\theta(\xi) + \theta(\xi^{-1}))(1 + 2\pi'_q)$
		& $-(1+ 2\pi'_q)$
		\\ \cline{2-6}

		&\begin{tabular}{c}
			$R(\alpha)$ \, \small{$\left(\alpha \in [T_{\ell'}^\wedge / \equiv], \alpha^2 \neq 1\right)$} 
			
		\end{tabular}
		& $q+1$
		& $\alpha(a) + \alpha(a^{-1})$
		& $0$
		& $1$
		\\ 
		\hline
		\hline

		\multirow{3}{*}[0ex]{
			\begin{tabular}{c}
			$T_{ i,1}$\\
			$(2 \leq i \leq n)$
		\end{tabular}}
		& $1_G + \St + \, \Xi'_{i-1}$ 
		& $1 + q + (q-1)\pi'_{q,i-1}$
		& $2$ 
		& $-2\pi'_{q,i-1}$ 
		& $1 - \pi'_{q,i-1}$
		\\ \cline{2-6}

		&		
		$\St +\, \Xi'_{i-1}$ 
		& $q + (q-1)\pi'_{q,i-1}$
		& $1$
		& $-1-2\pi'_{q,i-1}$
		& $-\pi'_{q,i-1}$
		\\ \cline{2-6}

		&\begin{tabular}{c}
			$\Xi'_{\theta, i-1}$\,	\small{$\left(\theta \in [T_{\ell'}'^\wedge / \equiv], \theta \neq 1\right)$} 
			
		\end{tabular}
		& $2(q-1)\pi'_{q,i-1}$
		& $0$
		& $-2\left(\theta(\xi) + \theta(\xi^{-1})\right)\pi'_{q,i-1}$ 
		& $-2\pi'_{q,i-1}$
		\\ 
		\hline 		
		\hline 
		
		\multirow{3}{*}[0ex]{
			$T_{n+1,1}$}
		& $1_G$ 
		& $1$
		& $1$
		& $1$
		& $1$
		\\ \cline{2-6}

		& $\Xi'$ 
		& $(q-1)\pi'_q$
		& $0$ 
		& $-2\pi'_q$
		& $-\pi'_q$
		\\ \cline{2-6}

		&\begin{tabular}{c}
			$\Xi'_{\theta}$ \, \small{$\left(\theta \in [T_{\ell'}'^\wedge / \equiv], \theta \neq 1\right)$}
			
		\end{tabular}
		& $2(q-1)\pi'_q$
		& $0$
		& $-2\pi'_q(\theta(\xi) + \theta(\xi^{-1}))$
		& $-2\pi'_q$
		\\ \hline
		
	\end{longtable}

\begin{center}
\begin{minipage}{.4\textwidth} 
\renewcommand*{\arraystretch}{1.5}
\begin{longtable}{|c||c|c|c|c|}
	\caption{$T_{i,i}$ for $2 \leq  i \leq n$.} 
	\label{tab:lmidq+1T_ii}
	\\  \cline{1-5}
	
	&
	\begin{tabular}{c}
		\textbf{$I_2$ }
	\end{tabular}
	& 
	\begin{tabular}{c}
		$\mathbf{d}'(\xi)$ \, ($ \xi \in \Gamma'_{\ell'}$)
	\end{tabular}
	&
	\begin{tabular}{c}
		\textbf{$\sigma'$} 
	\end{tabular}
	\\ \hline \hline 			
	
	$1_G + \St +\, \Xi'_{i-1}$ 
	& $1 + 2\pi''_{q,i-1}$ 
	& $1 + 2\pi''_{q,i-1}$
	& $1$ 
	\\ \cline{1-5}

	$\St +\, \Xi'_{i-1}$ 
	& $1 + 2 \pi''_{q,i-1}$
	& $1 + 2\pi''_{q,i-1}$
	& $-1$ 
	\\ \cline{1-5}

	\begin{tabular}{c}
		$\Xi'_{\theta, i-1}$ \vspace{-1ex}\\ 
		\tiny{$\left(\theta \in [T_{\ell'}'^\wedge / \equiv], \theta\neq 1\right)$}
	\end{tabular}
	& $2(1 + 2\pi''_{q,i-1})$
	& $\left(\theta(\xi) + \theta(\xi^{-1})\right)(1 + 2\pi''_{q,i-1})$
	& $0$
	\\ \hline 
	
\end{longtable}
\end{minipage}\hfil
\begin{minipage}[t]{.4\textwidth}
~
\end{minipage}\hfil
\begin{minipage}{.55\textwidth}
\renewcommand*{\arraystretch}{1.5}
\begin{longtable}{|c||c|c|c|c|}
	\caption{$T_{n+1,n+1}$.} 
	\label{tab:lmidq+1T_n+1n+1}
	\\  \cline{1-5}
	
	&
	\begin{tabular}{c}
		\textbf{$I_2$ }
	\end{tabular}
	& 
	\begin{tabular}{c}
		$\mathbf{d}'(\xi)$ \, ($\xi \in \Gamma'$)
	\end{tabular}
	&
	\begin{tabular}{c}
		\textbf{$\sigma'$}
	\end{tabular}
	\\ \hline \hline 			
	
	$1_G$ 
	& $1$ 
	& $1$
	& $1$ 
	\\ \cline{1-5}

	$\Xi'$ 
	& $1$
	& $1$
	& $-1$ 
	\\ \cline{1-5}

	\begin{tabular}{c}
		$\Xi'_{\theta}$ \vspace{-1ex}\\ 
		\tiny{$\left(\theta \in [T_{\ell'}'^\wedge / \equiv], \theta \neq 1\right)$}
	\end{tabular}
	& $2$
	& $\theta(\xi) + \theta(\xi^{-1})$
	& $0$
	\\ \hline 
	
\end{longtable}
\end{minipage}
\end{center}
\end{landscape}

\vspace{4mm}

\textbf{Acknowledgments.}
The authors thank Olivier Dudas for useful discussions and Gunter Malle for feedback on an earlier version of this manuscript. The first author acknowledges the hospitality of the \emph{Lehrstuhl f\"ur Algebra und Zahlentheorie} of the \emph{RWTH Aachen University} during a visit in November 2021 to work on this article. The second author gratefully acknowledges financial support by SFB TRR 195, and thanks the \emph{Institut f\"ur Algebra, Zahlentheorie und Diskrete Mathematik} of the \emph{Leibniz Universit\"at Hannover} for their hospitality during the writing period of this article.


	\nocite{}
	\bibliographystyle{aomalpha}
	\bibliography{biblio.bib}
	\bigskip
	


\end{document}